\newextarrow{\xbigtoto}{{20}{20}{20}{20}}
{\bigRelbar\bigRelbar{\bigtwoarrowsleft\rightarrow\rightarrow}}    
\newcommand{\Q}{\mathbb{Q}}
\newcommand{\Z}{\mathbb{Z}}
\newcommand{\N}{\mathbb{N}}
\newcommand{\com}{\mathsf{c}}
\newcommand{\Set}{\mathrm{Set}}
\newcommand{\ord}{\mathrm{ord}}
\newcommand{\Loc}{\mathrm{Loc}}
\newcommand{\argu}{\text{(---)}}
\newcommand{\frap}{\mathfrak{p}}
\newcommand{\Frap}{\mathfrak{P}}
\newcommand{\gav}{|\cdot|}    
\newcommand{\Mag}[1]{\mathop{\mathrm{mag}}#1}  
\newcommand{\pchav}[2]{|#1|_{\mathbb{F}_{#2}}}  
\newcommand{\spec}{\mathrm{spec}}  
\newcommand{\ISpec}{\mathrm{ISpec}}
\newcommand{\LSpec}{\mathrm{LSpec}}
\newcommand{\FSpec}{\mathrm{FSpec}}
\newcommand{\PSpec}{\spec^+(\Z)}  
\newcommand{\M}{\mathcal{M}}
\newcommand{\AffBerk}{\mathbb{A}_{\mathrm{Berk}}}
\newcommand{\Znq}{\mathbb{Z}_{\neq 0}}
\newcommand{\lav}{\overleftarrow{\mathsf{av}}}  
\newcommand{\Dav}{\mathsf{av}}  
\newcommand{\Davpos}{\mathsf{av}^{+}}  
\newcommand{\Alav}[1]{#1\text{-}\lav}  
\newcommand{\ADav}[1]{#1\text{-}\Dav}  
\newcommand{\ADavpos}[1]{#1\text{-}\Davpos}  
\newcommand{\FL}{\mathfrak{P}_{\overleftarrow{\Lambda}}}
\newcommand{\FLDed}{\mathfrak{P}_{\Lambda}}
\newcommand{\R}{\mathbb{R}}
\newcommand{\thT}{\mathbb{T}}
\newcommand{\baseS}{\mathcal{S}}
\newcommand{\cosimp}[3]{\xymatrix@1{#1 \ar@<.4ex>[r] \ar@<-.4ex>[r] & {\ }#2 \ar@<0.8ex>[r] \ar[r] \ar@<-.8ex>[r] & {\ } #3 \ar@<1.2ex>[r] \ar@<.4ex>[r] \ar@<-.4ex>[r] \ar@<-1.2ex>[r] & \cdots }}
\newsavebox{\pullback}
\sbox\pullback{%
	\begin{tikzpicture}%
	\draw (0,0) -- (1ex,0ex);%
	\draw (1ex,0ex) -- (1ex,1ex);%
	\end{tikzpicture}}
\newcommand{\equalizer}[2]{\xymatrix@1{#1 \ar@<.4ex>[r] \ar@<-0.4ex>[r] & {\ } #2}}
\newcommand{\adjunction}[4]{\xymatrix@1{#1{\ } \ar@<-0.3ex>[r]_{ {\scriptstyle #2}} & {\ } #3 \ar@<-0.3ex>[l]_{ {\scriptstyle #4}}}}
\begin{document}
	\bibliographystyle{alpha}
	\theoremstyle{plain}
	\newtheorem{theorem}{Theorem}[section]
	\newtheorem*{theorem*}{Theorem}
	\newtheorem*{condition*}{Condition}
	\newtheorem*{definition*}{Definition}
	\newtheorem*{corollary*}{Corollary}
	\newtheorem{proposition}[theorem]{Proposition}
	\newtheorem{lemma}[theorem]{Lemma}
	\newtheorem{corollary}[theorem]{Corollary}
	\newtheorem{claim}[theorem]{Claim}
	\newtheorem{conclusion}[theorem]{Conclusion}

	\theoremstyle{definition}
	\newtheorem{definition}[theorem]{Definition}
	\newtheorem{question}{Question}
	\newtheorem{remark}[theorem]{Remark}
	\newtheorem{observation}[theorem]{Observation}
	\newtheorem{discussion}[theorem]{Discussion}
	\newtheorem{guess}[theorem]{Guess}
	\newtheorem{example}[theorem]{Example}
	\newtheorem{condition}[theorem]{Condition}
	\newtheorem{warning}[theorem]{Warning}
	\newtheorem{notation}[theorem]{Notation}
	\newtheorem{construction}[theorem]{Construction}
	\newtheorem{problem}[theorem]{Problem}
	\newtheorem{fact}[theorem]{Fact}
	\newtheorem{thesis}[theorem]{Thesis}
	\newtheorem{convention}[theorem]{Convention}
	
	\newcommand{\MING}[1]{{\color{purple} {\tiny \bf (M:)} {\bf #1}}}
	
	\title{A Point-Free Look at Ostrowski's Theorem and Absolute Values}
	\author{Ming Ng and Steven Vickers}
    
    \thanks{\emph{Thanks:} Research partially supported by EPSRC Grant EP/V028812/1.} 
	
\begin{abstract}
    This paper investigates the absolute values on $\Z$ valued in the upper reals (i.e. reals for which only a right Dedekind section is given).
    These necessarily include multiplicative seminorms corresponding to the finite prime fields $\mathbb{F}_p$.
    As an Ostrowski-type Theorem, the space of such absolute values is homeomorphic to a space of prime ideals (with co-Zariski topology) suitably paired with upper reals in the range $[-\infty, 1]$,
    and from this is recovered the standard Ostrowski's Theorem for absolute values on $\Q$. 
    
    Our approach is fully constructive, using, in the topos-theoretic sense, geometric reasoning with point-free spaces, and that calls for a careful distinction between Dedekinds vs. upper reals.
    This forces attention on topological subtleties that are obscured in the classical treatment.
    In particular, the admission of multiplicative seminorms points to connections with Berkovich and adic spectra.
    The results are also intended to contribute to characterising a (point-free) space of places of $\Q$.
\end{abstract}

\maketitle

Classically, an absolute value on $\Q$ is defined as a multiplicative norm $\gav\colon \Q\rightarrow [0,\infty)$, and we have a complete classification of all such absolute values via Ostrowski's Theorem: up to equivalence, they are all either Euclidean, or $p$-adic, or trivial.
On a basic level, this paper can be read as just a piece of constructive mathematics. Our work involves sharpening Ostrowski's theorem in various sensible ways -- e.g. by rephrasing Ostrowski's Theorem as a representation result (instead of just a classification result), or by adhering to an inherently topological sense of geometric reasoning (instead of working classically), extending previous work from~\cite{NV} .

However, rather more interestingly, our investigations also bring to light a subtle connection between topology and algebra previously elided by classical assumptions, raising challenging implications for the foundations of arithmetic geometry. 

Let us elaborate. First, geometric mathematics presents us with various \emph{classically} equivalent but \emph{constructively} inequivalent notions of the reals. As such, there exists several options for reworking the notion of an absolute value geometrically -- e.g. should the map be valued in Dedekinds or one-sided reals
(see Section~\ref{sec:UpperReals})?
Further, since $\gav\colon\Q\rightarrow [0,\infty)$ is determined by its values on the integers, one may also define an absolute value as a map from either $\Z$ or $\Q$. This selection of topological and algebraic options have a curious interaction, which we summarise in the following observation.

\begin{observation}\label{obs:absvalueQ} \hfill
	\begin{enumerate}[label=(\roman*)]
		\item An absolute value on $\Q$ (or indeed any field) must be valued in Dedekinds, and not the one-sideds.
		\item An absolute value on $\Z$ can be valued in the one-sided reals.  
	\end{enumerate}
\end{observation}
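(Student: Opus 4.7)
My plan for (i) is to exploit multiplicativity together with the fact that reciprocation is not an internal operation on upper reals. Given an upper-real-valued absolute value $|\cdot|$ on a field $F$, fix $q \in F^\times$; multiplicativity forces $|q|\cdot|q^{-1}| = |1| = 1$. The key observation to establish is the locale-theoretic one: a non-negative upper real $x$ is determined by its upper cut $U_x = \{r \in \Q : r > x\}$, and while multiplication of such data is well-defined, reciprocation is not. Indeed, for positive $x$, the map $x \mapsto 1/x$ naturally lands in the \emph{lower} reals, with lower cut computable as $\{1/r : r \in U_x\}$, whereas the upper cut of $1/x$ would require information of the form ``$r < x$'' not carried by $U_x$. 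I would then unpack $|q|\cdot|q^{-1}| = 1$ by comparing upper cuts to show that $|q|$ and $|q^{-1}|$ are mutually reciprocal, so the upper cut of $|q^{-1}|$ must coincide with that of the real $1/|q|$. This coincidence forces $|q|$ to carry lower-cut information as well, hence $|q|$ is Dedekind. Applying the same to $|q^{-1}|$ and noting that $|0| = 0$ is trivially Dedekind, the whole map is Dedekind-valued.

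For (ii), the plan is to observe that the argument of (i) requires invertibility in the underlying ring, absent in $\Z$ except for $\pm 1$, and to exhibit a witness. The $\mathbb{F}_p$ seminorm defined by $|n|_p = 0$ if $p \mid n$ and $|n|_p = 1$ otherwise is multiplicative and ultrametric; the only invertible elements $\pm 1$ have absolute value $1$, so the reciprocation obstruction of (i) never triggers. Consequently, $|\cdot|_p$ can be presented as a geometric map into the upper reals without ever asserting lower-bound data on $|n|_p$ when $p \mid n$; this is exactly the kind of seminorm that the upper-real framework admits while the Dedekind framework (with its forced decidable positivity) would rule out geometrically.

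The main obstacle is the locale-theoretic justification in (i), namely that two positive upper reals with product $1$ are forced to be Dedekind. This reduces to a careful calculation with the joint-upper-cut definition of multiplication: I would show that the upper cut of $|q^{-1}|$ equals $\{r > 0 : 1/r < |q|\}$, and that this set is determined by $U_{|q|}$ only when $|q|$ already carries its lower cut. Once this technical step is settled, part (ii) is essentially the algebraic observation that $\Z$ is not a field, together with the routine verification that the $\mathbb{F}_p$ seminorm satisfies all the axioms of an absolute value on $\Z$ that the paper's framework requires.
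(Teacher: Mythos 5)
Your approach to (i) is essentially the paper's: the paper's Proposition~\ref{prop:fieldav} proves exactly that, for a geometric field $R$, the upper cut of $|x^{-1}|$ supplies the lower cut of $|x|$ (defining $q<|x|$ when either $q<0$, or there is $y$ with $xy=1$ and some rational $r'$ with $|y|<r'$ and $q<1/r'$), and then verifies disjointness and locatedness for the resulting pair.

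Two points of caution. First, your description of the key locale-theoretic step has the direction of the argument slightly turned around. You write that ``the upper cut of $|q^{-1}|$ equals $\{r>0 : 1/r < |q|\}$, and that this set is determined by $U_{|q|}$ only when $|q|$ already carries its lower cut.'' That phrasing sounds like you are trying to argue that $|q|$ \emph{must already have} a lower cut because otherwise $U_{|q^{-1}|}$ could not exist --- but $U_{|q^{-1}|}$ is \emph{given} (the absolute value is upper-real-valued), so nothing needs to force its existence. The actual argument is constructive: you \emph{define} the lower cut of $|q|$ as $\{q' : \exists r\in U_{|q^{-1}|},\, q'r<1\}$ and then check it meshes with the given upper cut to form a Dedekind section. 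That check (disjointness, locatedness) is the real content of the lemma and is glossed over in your plan.

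Second, for (ii): the $\mathbb{F}_p$ seminorm $\pchav{\cdot}{p}$ actually \emph{does} factor through the Dedekind reals --- it only takes the values $0$ and $1$, both Dedekind. What it witnesses is failure of \emph{positive definiteness} (Definition~\ref{def:avDed}), which by Proposition~\ref{prop:fieldav} is forced in the field case but not for $\Z$. So your example correctly shows that the proof of (i) cannot go through for $\Z$, but to exhibit an absolute value that is literally \emph{not} Dedekind-valued you would need something like a generic/parametrised absolute value (cf.\ the generic $\lambda\in\overleftarrow{[-\infty,1]}$ in Theorem~\ref{thm:ostrowskiN}), where the infimum defining $\lambda_{\gav}$ is genuinely only an upper real. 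It is worth being precise here since Definition~\ref{def:avDed} makes ``Dedekind'' and ``positive definite Dedekind'' two distinct strengthenings.
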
  

The justification for this will be seen in Proposition~\ref{prop:fieldav}, after the relevant definitions have been introduced. For now, the statement of Observation~\ref{obs:absvalueQ} sets the scope for our present task: work towards an explicit description of the space of absolute values, first by proving a modified version of Ostrowski's theorem for upper-valued absolute values on $\mathbb{Z}$
\[\gav\colon \Z\rightarrow \overleftarrow{[0,\infty)},\]
before recovering the standard Ostrowski's Theorem for  absolute values on $\Q$. 

Before proceeding, a few obligatory remarks regarding motivation. The decision to use upper reals rather than Dedekinds may strike the lay reader as constructivist hair-splitting, but in fact it ties together two \textit{a priori} unrelated mathematical threads:
\begin{enumerate}[label=(\alph*)]
	\item Vickers \cite{ViLoccompI}: To provide a geometric account of the completions of a (generalised) metric space, it suffices for the metric to be valued in non-negative upper reals (as opposed to the Dedekinds).
	\item Berkovich \cite{BerkovichMonograph}: For a suitable\footnote{By suitable, let us mean: non-Archimedean, non-trivially valued, algebraically closed and complete.} field $K$, every point $x$ of the Berkovich affine line $\AffBerk^1$ corresponds to a nested descending sequence of closed discs in $K$:
	\[D_1\supseteq D_2 \supseteq \dots\]
\end{enumerate}
This connection is made precise in \cite{MingPhD} (and in a planned forthcoming paper), which gives an example of how the structural gap between trivially vs. non-trivially valued fields in Berkovich geometry can
 (surprisingly) be eliminated via point-free techniques.%
\footnote{The reader familiar with Berkovich geometry should notice the suggestive parallel between ``a point of $\AffBerk^1$ = a nested sequence of discs'' and ``a point of the upper reals = a rounded ideal'' (see e.g. \cite[Remark 1.30]{NV}). 
 } 

There is a more fundamental question: why even rework these algebraic ideas geometrically in the first place? Our answer lies in our wish to exploit the tight connections between geometric logic and toposes. A key structure theorem in topos theory tells us that the models of any geometric theory are classified by some topos, and that any topos classifies the models of some geometric theory.%
\footnote{
  Technically: ``topos'' here means a Grothendieck topos,
  i.e. a bounded $\baseS$-topos where the fixed base $\baseS$ can be any elementary topos with natural numbers object.
}
In other words, if absolute values can be described as models of some geometric theory, then we have at our disposal a deep collection of topos-theoretic tools that allows us to extract topological information from our logical setup.
This insight has been developed in~\cite{MingPhD}, which utilised descent techniques to reveal some striking differences between the Archimedean vs. non-Archimedean places of $\Q$.
Again, the upper reals play a key role here, and their presence raises challenging questions regarding certain longstanding assumptions in number theory.


\tableofcontents	

\section{Preliminaries} \label{sec:Prelim}

Methodologically, the present paper is an exercise in point-free analysis, developed using techniques of geometric logic.

Already in~\cite{NV} we have described those techniques and used them to construct real exponentiation and logarithms, so for a detailed introduction we shall refer the reader to that paper and to other references mentioned there.
For the present we shall content ourselves with noting the more prominent oddities that might otherwise distract the reader.

\subsection{Point-free topological spaces} \label{sec:PtfreeSpaces}
Geometric mathematics is a particular regime of constructive mathematics tied to the (logical) perspective that a topos can be regarded as a ``point-free space'' in the following sense: 

\begin{definition}\label{def:pointfree space}
  A \emph{(point-free) space} is described by a geometric theory $\thT$, whose models are the points of the space.
  We write $[\thT]$ for the space of models of $\thT$.
    
  A map $f\colon X\rightarrow Y$ is defined by a geometric construction of points $f(x)\in Y$ out of points $x\in X$. 
\end{definition}

One way to read Definition~\ref{def:pointfree space} is that it pulls the usual notion of a \emph{space} away from its underlying set theory. Points of a space are now defined as \emph{models} of a theory as opposed to \emph{elements} of a set (equipped with some chosen topology); continuous maps are now defined as \emph{geometric transformations} of such models as opposed to \emph{functions} on sets that respect the chosen topology.

Note that the points, the models, may be sought \emph{in any Grothendieck topos.}
Hence $[\thT]$ is not in a literal sense the collection of all models.
Rather, it is a ``(pseudo-)functor of models'', by analogy with the ``functor of points'' in algebraic geometry, used to replace a set of points in situations where there are insufficient global points.
(Compare this with the definition of theory in~\cite[B4.2]{J1}.)
Technically, it is represented as the classifying topos $\baseS[\thT]$, the category of sheaves, but we make the notational distinction so as to differentiate between maps (geometric morphisms) $[\thT]\to [\thT']$
and functors $\baseS[\thT]\to \baseS[\thT']$.

\begin{definition}[Geometric Theories]
\label{def:geomlogic}
  There are various ways to define what a geometric theory is; compare, for example, \cite[D1.1.6]{J2} and~\cite[B4.2.7]{J1}.
  We set out here some key features. For further details of our usage, see~\cite{NV} or~\cite{VickersPtfreePtwise}.
  
  At its most basic, a geometric theory is a many sorted, first-order theory.
  However, there are some special features and restrictions in how its extralogical axioms may be expressed.
  
  First, there is a two-level distinction between \emph{formulae} and \emph{sequents}.
    \begin{itemize}
        \item
        A \emph{geometric formula} is a logical formula built up from the symbols in the signature and a context of finitely many free variables,
        using truth $\top$, equality $=$, finite conjunctions $\land$, arbitrary (possibly infinite) disjunctions $\bigvee$, and $\exists$.
        \item
        A \emph{geometric sequent} is an expression of the form
        $\forall xyz\ldots(\phi\rightarrow \psi)$,
        where $\phi$ and $\psi$ are geometric formulae in the same context $\{x,y,z,\ldots\}$.
    \end{itemize}
        \item
  The theory's axioms are in the form of sequents.
  Thus negation, implication, and universal quantification are expressed in the theory at a single level -- they cannot be nested.
  
  Adding extra axioms adds extra constraints on the models, and this provides the point-free notion of \emph{subspace.} Of course, we cannot simply talk about a sub\emph{set} of the points.
  
  Second, disjunctions (but not conjunctions) are allowed to be infinite.
  This provides the link with topology,
  with the propositional formulae (those without free variables) corresponding to the opens.
  The signature of a theory supplies subbasic opens, and for the rest the finite $\land$ and the arbitrary $\bigvee$ correspond to intersections and unions of opens.
  
  Finally, the \emph{geometric constructions}, by which maps are defined, are allowed in a theory as sort constructors.  
  Colimits, finite limits (note the analogy with disjunctions and finite conjunctions) and free algebra constructions are all geometric, and geometric constructions include the natural numbers $\mathbb{N}$, the integers $\mathbb{Z}$ and the rationals $\mathbb{Q}$, along with their usual arithmetic structure (e.g. addition, multiplication, strict order etc.).
\end{definition}

The basic deal is that if you reason geometrically, then all your constructions are continuous.
You also find the very notion of topological space is greatly generalised, to that of topos,
and proper classes such as those of sets or rings can be described as generalised spaces.
For the purposes of the present paper, however, we shall restrict ourselves to the category $\Loc$ of \emph{localic} spaces, i.e. spaces of models of a theory equivalent a propositional one. (For more background, see e.g. \cite{Vi4} or \cite[Ch. 2]{MingPhD}).

On the other hand, the price of that deal is a greatly reduced ability to construct sets.
For instance, powersets, the real line $\R$, and sets of functions, are all non-geometric constructions, i.e. they are not geometrically definable \emph{as sets.} These constructions still enter into our geometric reasoning, but they must be described as spaces (via geometric theories), and have intrinsic non-discrete topologies that cannot be stripped away.
A phrase ``the space of ...'' will presuppose the existence of some geometric theory whose models are the ``...''s.

\begin{discussion}[Decidability and Topology]\label{dis:geomCONSTRUCTIVE} Let $\phi$ be a  propositional formula (no free variables) definable in $\thT$, and $M$ be a model of $\thT$. Classically, the Law of Excluded Middle $\phi\vee\lnot\phi$ automatically gives that $\phi$ evaluates in $M$ as either $\top$ or $\bot$. However, in geometric logic, we are unable to even express this principle as stated (much less affirm its validity) since our syntax lacks negation. We must therefore look for alternative formulations.
    
In our setting, the validity of non-constructive principles (such as LEM) gets reframed as a question of topology. To illustrate, suppose $U$ is a propositional formula, written ``$U$'' because, in point-free topology, we regard ``the collection of models in $[\thT]$ satisfying $U$'' as an open subspace of $[\thT]$. Hence, the question of whether the logical axiom
    \[\top\to U\vee\lnot U\]
holds in $\thT$ should properly be understood as asking if there is an open $V$ that is a Boolean complement of $V$ in $[\thT]$
($U\wedge V \rightarrow \bot$, $\top \rightarrow U\vee V$).
We call $U$ \emph{decidable} in this case. As we shall later see, in situations where a desired property is not decidable, this often creates subtleties when trying to prove analogues of classical results in our setting.
 On the other hand, if the relevant opens are decidable, then the definitions can be made by case splits, $U$ versus $V$.
 (Categorically, $[\thT]$ is then a coproduct of $U$ and $V$.)
 For example, equality on the rationals is decidable. 

More generally, we must be more careful, but it is often helpful to look at sub\emph{spaces} of 1. There is a lattice of subspaces
(for an account from the geometric point of view see~\cite{ViSublocales}),
and there every open $U$ of a space $X$ (with sequent $\top \rightarrow U$) does have a \emph{closed complement} $U^\com$, with sequent $U\rightarrow\bot$. This is a Boolean complement in the lattice of subspaces. In other words, decidability of $U$ means it is \emph{clopen}, both open and closed (because it is $V^\com$).
\end{discussion}

Although we cannot define maps from $X$ by cases based on $U$ and $U^\com$, the following lemma shows we can use cases to prove properties corresponding to subspaces. That's because if an open $U$ and its closed complement $U^\com$ are both contained in some subspace, then so must be their join in the subspace lattice, and that is the whole space $X$. We emphasise that this does not allow us to say that every point in $X$ is either in $U$ or in $U^\com$ -- that may fail to be (constructively) true.

\begin{lemma}[Case-Splitting Lemma]\label{lem:casesplitting} Consider the following cospan in $\Loc$:
    \begin{equation}\label{eq:cospan}
        \begin{tikzcd}
            & Y \ar[d,hook,"i"]\\
            X \ar[r,swap,"f"] & Z
        \end{tikzcd}
    \end{equation}
    where $i$ is a subspace embedding. Further, suppose that:
    \begin{itemize}
        \item  $X=U\vee U^\com$, where $U$ is an open subspace of $X$ and $U^\com$ is its closed complement,
        with subspace embeddings
        $i_1\colon U\hookrightarrow X$ and $i_2\colon U^\com\hookrightarrow X$.
        \item
          There are maps
          $f_1\colon U\rightarrow Y$ and $f_2\colon U^\com\rightarrow Y$,
          with $f\circ i_1 = i\circ f_1$ and $f\circ i_2 = i\circ f_2$. 
    \end{itemize}
    Then the pullback $P$ of the cospan in Equation~\eqref{eq:cospan} is isomorphic to $X$.
\end{lemma}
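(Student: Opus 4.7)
The plan is to exploit the fact that subspace embeddings in $\Loc$ are stable under pullback, so that the pullback projection $P \to X$ is itself a subspace embedding; it then suffices to show this subspace exhausts $X$ in the subspace lattice, which follows because both $U$ and $U^\com$ factor through $P$.

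More concretely, first I would observe that since $i \colon Y \hookrightarrow Z$ is a subspace embedding, the pullback projection $\pi_X \colon P \to X$ is also a subspace embedding (this is a standard stability property in $\Loc$; see for instance \cite{ViSublocales}). So $P$ can be identified with a subspace of $X$, and the question reduces to showing $P = X$ in the subspace lattice of $X$.

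Next, I would use the hypotheses $f \circ i_1 = i \circ f_1$ and $f \circ i_2 = i \circ f_2$ to invoke the universal property of the pullback. These equations say that the pairs $(i_1, f_1)$ and $(i_2, f_2)$ each constitute a cone over the cospan~\eqref{eq:cospan}, so they induce unique maps $\widetilde{i}_1 \colon U \to P$ and $\widetilde{i}_2 \colon U^\com \to P$ with $\pi_X \circ \widetilde{i}_1 = i_1$ and $\pi_X \circ \widetilde{i}_2 = i_2$. Since $i_1$ and $i_2$ exhibit $U$ and $U^\com$ as subspaces of $X$, and $\pi_X$ is a subspace embedding, these factorisations identify $U$ and $U^\com$ as subspaces of $P$.

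Finally, I would conclude by a lattice-theoretic step: within the subspace lattice of $X$, both $U$ and $U^\com$ are contained in $P$, hence so is their join $U \vee U^\com$. By assumption $U \vee U^\com = X$, so $P \supseteq X$, and combined with $P \hookrightarrow X$ this forces $P \cong X$. I do not anticipate a major obstacle here; the only subtle point is the stability of subspace embeddings under pullback, which is where point-free subtleties could in principle intervene, but this is well established for sublocales. The rest is a formal consequence of the universal property of the pullback together with the defining property of the join in the subspace lattice.
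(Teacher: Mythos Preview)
Your proposal is correct and matches the paper's proof essentially step for step: the paper also shows that the pullback projection $p\colon P\to X$ is an embedding (arguing via regular monics in $\Loc$, citing \cite{StoneSp}), then factors $i_1$ and $i_2$ through $P$ by the universal property, and concludes via $X = U\vee U^\com \leq P$ in the subspace lattice. The only cosmetic difference is the choice of reference for stability of embeddings under pullback.
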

\begin{proof}    
  We have pullbacks of spaces, so the pullback $P$ of Diagram~\eqref{eq:cospan} exists:
    \begin{equation}\label{eq:pullbackCASE}
        \begin{tikzcd}
            P \arrow{r}{\widehat{f}} \arrow[hook]{d}[swap]{p} \arrow[dr, phantom, "\usebox\pullback", very near start, color=black] & Y \arrow[hook]{d}{i}\\
            X \arrow{r}[swap]{f}
            & Z
        \end{tikzcd}
    \end{equation}
    Further (see e.g. \cite[\S II.2.3]{StoneSp}), embeddings of (localic) spaces are precisely the regular monics in $\Loc$. Since regular monics are preserved by pullback, this implies the map $p\colon P\rightarrow X$ of Diagram~\eqref{eq:pullbackCASE} is a regular monic as well. In English, this means: the pullback $P$ is a subspace of $X$.
    
    Exploiting the universal pullback property, we obtain the following diagrams:
    
    \begin{equation}
        \begin{tikzcd}
            U
            \arrow[drr, red, bend left, "f_1"]
            \arrow[ddr, red, hook, bend right, swap, "i_1"]
            \arrow[dr, dotted, "\theta_1" description] & & \\
            &P \arrow{r}{\widehat{f}} \arrow[hook]{d}[swap]{p} \arrow[dr, phantom, "\usebox\pullback", very near start, color=black] & Y \arrow[hook]{d}{i}\\
            &X\arrow{r}[swap]{f}
            & Z
        \end{tikzcd}
        \begin{tikzcd}
            U^\com
            \arrow[drr, blue, bend left, "f_2"]
            \arrow[ddr, blue, hook, bend right, swap, "i_2"]
            \arrow[dr, dotted, "\theta_2" description] & & \\
            &P \arrow{r}{\widehat{f}} \arrow[hook]{d}[swap]{p} \arrow[dr, phantom, "\usebox\pullback", very near start, color=black] & Y \arrow[hook]{d}{i}\\
            & X \arrow{r}[swap]{f}
            & Z
        \end{tikzcd}
    \end{equation}
    Since $i_1=p\circ \theta_1$ and $i_2=p\circ \theta_2$ are regular monics, this implies $\theta_1$ and $\theta_2$ are regular monics as well, i.e. $U$ and $U^\com$ are subspaces of $P$.
    Hence in the lattice of subspaces of $X$, we have $X = U \vee U^\com \leq P$, so $p$ is an isomorphism.
\end{proof}

\subsection{One-sided reals} \label{sec:UpperReals}

As is standard in point-free topology, we shall take the real numbers to be Dedekind sections of the rationals.
The two halves $(L,U)$ of a section are taken to be \emph{rounded}, for instance if $q\in L$ then there is some $q'>q$ with $q'\in L$.
This has the effect that the definition of real numbers as Dedekind sections also defines the Euclidean topology: each rational $q$ defines the opens $(q,\infty)$ as those sections for which $q\in L$, and $(-\infty, q)$ for $q\in U$.
Of course, this is typical of point-free spaces: the points are the models of some geometric theory, and then the geometric formulae are the opens.

Having two halves $L$ and $U$ corresponds to approximating reals from below and above, and classically, each can be derived from the other.
Geometrically, however, we also have ``one-sided'' approximation, lower ($L$) or upper ($U$), and they are not (constructively) equivalent to Dedekind sections.
Qualitatively they correspond to the topologies of lower or upper semicontinuity.


First, a series of remarks on order and topology in the upper reals.

\begin{convention} In any space we have a \emph{specialisation order}, $x \sqsubseteq y$ if every open containing $x$ also contains $y$.
	For upper reals this comes down to
	$\forall q\in\Q\, (x < q \rightarrow y < q)$.
	This says that, numerically, $x\geq y$, so we shall write $\geq$ and $\leq$ to denote the specialisation order and its opposite.
	For lower reals it is the other way round: specialisation order is $\leq$.
	
	In recognition of this, we use notation with arrows to show the upward direction of the specialisation order,
	for example, $\overleftarrow{[-\infty,\infty]}$ for the upper reals from $-\infty$ to $\infty$ (inclusive).
	Thus the pairs $(x,y)$ such that $x\leq y$ form a subspace of
	$\overleftarrow{[-\infty,\infty]} \times \overleftarrow{[-\infty,\infty]}$.
	
\end{convention}

\begin{remark}[The Existence of Infima/Suprema] \label{rem:infsupDed}\hfill 
	
	\begin{enumerate}[label=(\roman*)]
		\item  An important fact about point-free spaces is that they have all sups of families of points directed with respect to the specialisation order.%
		\footnote{
			In point-set topology this is an aspect of what is known as \emph{sobriety.}
		}
		For upper reals, these are directed \emph{infima.}
		In fact, since the rationals are totally ordered, this implies the existence of arbitrary infima -- at least if $\infty$ is included, as the nullary $\inf$.
		\item Classically, it is taken as characteristic of reals that they have all bounded infima and suprema.
		Constructively this is not the case.
		By item (i) we have $\inf$s of upper reals and $\sup$s of lower reals.
		For Dedekind reals, we have binary $\min$ and $\max$, but for an infinitary $\inf$ or $\sup$ of Dedekinds we get, in general, only an upper or lower real respectively.
	
	\end{enumerate}
  
\end{remark}

\begin{remark} It is clear from Remark~\ref{rem:infsupDed} that 
	we cannot capture strict inequality $x<y$ for upper reals.
	Fixing $x$, there cannot be a space $\overleftarrow{(x,\infty]}$ because it would have to contain all the rationals bigger than $x$, and hence also their infimum, which is $x$ itself.
	However, we do have a special case for when $y$ is rational, as, for each $q$, $x<q$ defines an open $\overleftarrow{[-\infty, q)}$ of $\overleftarrow{[-\infty,\infty]}$: it corresponds to the geometric sequent $\top \rightarrow x<q$.
	Its closed complement, which we shall write $\overleftarrow{[q, \infty)}$, corresponds to the sequent (not a geometric formula)
	$x<q \rightarrow \bot$.
	It is easy to show that it is equivalent to $q\leq x$, i.e. $\forall r\in\Q(x<r \rightarrow q<r)$.
	
\end{remark}

The one-sided reals have rich arithmetic structure, developed in~\cite{NV} as far as exponentiation and logarithms. However, there is one big proviso.
All our maps are continuous, and continuous maps preserve specialisation order. Hence all the arithmetic operations, to be definable on one-sided reals, have to be monotone (or antitone if they map from upper to lower or vice versa).
For example, upper reals can be added, but not subtracted (which would be antitone in the second argument).
Non-negative upper reals can be multiplied, but this cannot be extended to negatives; and even non-negative upper reals cannot be divided.

For exponentiation and logarithms these conditions can get quite intricate.
The main result we shall take from~\cite{NV} is the following.
Note the hypothesis $1\leq x$. For $0<x<1$, the exponential $x^\lambda$ would be antitone in $\lambda$.

\begin{theorem}[Upper Real Exponents]\label{thm:exponentiation} Fix a Dedekind $x\in[1,\infty)$. There exists
    the following exponentiation map on the upper reals
    \begin{align*}
        x^{\argu}\colon \overleftarrow{[-\infty,\infty)}&\longrightarrow \overleftarrow{[0,\infty)}
    \end{align*}
    satisfying the Basic Equations:
    \begin{align*}
        x^{\lambda+\lambda'}=x^\lambda x^{\lambda'}\text{, }
        \quad &x^0=1
        \\
        x^{\lambda\cdot \lambda'}=(x^\lambda)^{\lambda'}\text{, }
        \quad 
        &x^1 = x\\
        (xy)^\lambda = x^\lambda y^\lambda\text{, }
        \quad &1^\lambda = 1.
    \end{align*}
    Further, the map $x^\argu$ is (weakly) monotonic, i.e. $\lambda\leq\lambda'$ implies $x^\lambda\leq x^{\lambda'}$.
\end{theorem}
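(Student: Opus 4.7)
This theorem is the principal result of our earlier paper~\cite{NV}, so rather than a fresh derivation I will sketch the shape of the argument and indicate where the point-free perspective bites.

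The plan is to bootstrap from rational exponents. For each $r \in \Q$, the expression $x^r$ is a well-defined Dedekind real built from $x$ by integer powers and positive $n$th roots, and the assumption $x \geq 1$ makes the assignment $r \mapsto x^r$ monotone on $\Q$. One then extends to an upper real exponent $\lambda$, presented by its rounded upper set $U_\lambda = \{r \in \Q : \lambda < r\}$, via the directed infimum
\[
x^\lambda := \inf \{x^r : r \in U_\lambda\},
\]
taken in $\overleftarrow{[0,\infty)}$, where by Remark~\ref{rem:infsupDed}(i) arbitrary infima exist. Unpacked, the rounded upper set defining $x^\lambda$ is $\{q \in \Q : \exists r \in U_\lambda,\ x^r < q\}$, built using only existential quantification, rational arithmetic, and the strict inequality $x^r < q$ between a Dedekind and a rational; hence the construction is geometric. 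Monotonicity drops out immediately: if $\lambda \leq \lambda'$ then $U_\lambda \supseteq U_{\lambda'}$, so $U_{x^\lambda} \supseteq U_{x^{\lambda'}}$, giving $x^\lambda \leq x^{\lambda'}$.

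Most of the basic equations reduce, by manipulation of rounded upper sets, to their rational counterparts. For $x^{\lambda + \lambda'} = x^\lambda x^{\lambda'}$ one observes that a rational $q$ lies above the right-hand side exactly when $q > x^r x^s = x^{r+s}$ for some $r \in U_\lambda$ and $s \in U_{\lambda'}$, matching the upper set of the left-hand side. The identities $x^0 = 1$, $x^1 = x$, $1^\lambda = 1$ and $(xy)^\lambda = x^\lambda y^\lambda$ (for bases $\geq 1$) are similarly transparent once the upper sets are written out.

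The main obstacle, and the place where \cite{NV} has to do the real work, is the identity $(x^\lambda)^{\lambda'} = x^{\lambda \lambda'}$. When $\lambda < 0$, the upper real $x^\lambda$ lies in $[0,1]$, so naive iteration falls outside the hypothesis $x \geq 1$; and for upper reals of mixed sign the product $\lambda \lambda'$ fails to be monotone and so is not directly available as an operation between upper reals. The remedy is a careful case split on the signs of $\lambda$ and $\lambda'$, combined with the identity $x^{-\mu} = (x^\mu)^{-1}$ (interpreted carefully as a passage between upper and lower reals) and repeated use of the Case-Splitting Lemma~\ref{lem:casesplitting}. Arranging these cases so that each piece is monotone in the correct direction, and so that they agree on overlaps, is the technical heart of the argument.
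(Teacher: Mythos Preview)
Your proposal is correct and takes essentially the same approach as the paper: both defer the construction and the Basic Equations to~\cite[Prop.~3.4]{NV}, and your sketch of that argument is reasonable. The one small difference is in the monotonicity claim: you verify it directly by unpacking the rounded upper sets, whereas the paper observes more abstractly that $x^{\argu}$, being a geometric (hence continuous) map, automatically preserves the specialisation order on $\overleftarrow{[-\infty,\infty)}$ --- a slicker argument that avoids revisiting the construction.
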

\begin{proof} The fact that $x^\argu$ is well-defined and satisfies the listed basic equations is \cite[Prop. 3.4]{NV}. The fact that $x^\argu$ is monotonic come for free since $x^\argu$, being a continuous map, must respect the specialisation order of $\overleftarrow{[-\infty,\infty)}$.
\end{proof}
The exponential map on upper reals also has an inverse $\log_x$.

Finally, exponentiation and logarithms on the usual Dedekind reals behave as expected in the geometric setting. This was worked out explicitly in Sections 3 and 4 in \cite{NV}, and will become relevant in 
Section~\ref{sec:Qav}.

\subsection{Spectra}\label{subsec:localicprimes}

Classically, Ostrowski's Theorem holds that all non-Archimedean absolute values are equivalent to the $p$-adic absolute values $\gav_p$, for some prime $p$. As such, we need a way of talking about the primes in our setting.
Classically, one considers the set of prime ideals, perhaps equipped with an additional topology (e.g. Zariski, coZariski, constructible).
In the point-free setting, however, the topology and points must be defined simultaneously and cannot be thus separated.
The following three examples develop this remark,
following the notation and topos-theoretic methods of
Cole~\cite{ColeSpectra} and Johnstone~\cite[\S 6.5]{J0}.
In all three examples, $R$ denotes a commutative ring with 1
, and $\spec(R)$ denotes the classical set of prime ideals of $R$. 

\begin{example}[The Zariski Spectrum]\label{ex:LSpec} The \emph{Zariski Spectrum} $\LSpec(R)$ for $R$ is the space whose points are the \emph{prime filters} of $R$. More explicitly, they are the subsets $S$ of $R$ satisfying the axioms:
    \begin{itemize}
        \item $1\in S$, \quad and \quad $0\notin S$;
        \item $(\forall a,a'\in R).\,\, aa'\in S \leftrightarrow a\in S\land a'\in S$;
        \item $(\forall a,a'\in R).\,\, a+a'\in S \rightarrow a\in S\vee a'\in S$.
    \end{itemize}
    Notice: in $\Set$, these axioms say that $S$ is the complement of a prime ideal of $R$.%
    \footnote{
      For the avoidance of doubt, we do not accept $R$ as a prime ideal of itself.
    }
    Hence, working classically, we get a point-set topological space equivalent to $\spec(R)$ equipped with the \emph{Zariski topology}, generated by the basic Zariski open sets $D(a)=\{\frap \in \spec (R) \,|\, a\notin \frap \}$.
\end{example}

\begin{example}[The coZariski Spectrum]\label{ex:ISpec} The \emph{coZariski Spectrum} $\ISpec(R)$ for $R$ is the space whose points are the \emph{prime ideals} of $R$.
The axioms can be obtained using contrapositives of those for $\LSpec(R)$.    
Regarded as a point-set space, $\ISpec(R)$ is $\spec(R)$ equipped with the \emph{coZariski topology}, which is generated by the sub-basic open set $V(a)=\{\frap\in\spec(R)\,|\, a\in \frap\}$.%
\end{example}

\begin{example}[The Constructible Spectrum]\label{ex:FSpec} The \emph{Constructible Spectrum} $\FSpec(R)$ for $R$ is the space whose points are the \emph{complemented prime ideals} of $R$. More explicitly, the points of $\FSpec(R)$ are pairs $(P,S)$ where $P$ is a prime ideal, $S$ is a prime filter and $P$ and $S$ are complements of each other (as subobjects of $R$). Regarded as a point-set space, $\FSpec(R)$ is $\spec(R)$ equipped with the \emph{constructible topology}, which is the join of the Zariski and coZariski topologies.
\end{example}

\begin{discussion} Examples~\ref{ex:LSpec} - \ref{ex:FSpec} combine to give the following picture. Viewed as \emph{point-free spaces}, the points of the three spectral spaces are (constructively) different. However, when viewed classically as \emph{point-set spaces}, then the topology and points begin to separate. On the one hand, their underlying set of points become classically equivalent: since every subset in $\Set$ is complementable, there is no longer a meaningful difference between, e.g. a complementable prime ideal vs. a prime ideal in $\Set$. On the other hand, their respective topologies (Zariski, coZariski, constructible) are still different, although the algebraic reasons for this difference are now obscured.
\end{discussion}

When proving Ostrowski's Theorem, it is most natural to regard the $p$-adic absolute values $\gav_p$ as corresponding to the prime ideals of $\Z$. In light of our above discussion, this indicates that the correct space for us to use is the coZariski spectrum $\ISpec(\Z)$.
Note that this is different from the standard  choice of the Zariski spectrum in classical algebraic geometry,
where we are interested in localising by inverting all the elements of a prime \emph{filter}.

\begin{definition} \label{def:PSpec}
    We write $\PSpec$ for the set of non-zero prime elements of $\Z$.
    It is geometrically isomorphic to $\N$, and we have an obvious map from $\PSpec$ to $\FSpec(\Z)$ and hence also to $\ISpec(\Z)$ and to $\LSpec(\Z)$.
\end{definition}

\emph{Classically,} the three spectra for $\Z$ can be understood as three topologies on $\PSpec\cup\{0\}$.
\begin{itemize}
    \item
    For $\ISpec(\Z)$, the opens are arbitrary subsets of $\PSpec$, together with the whole space. $0$ is bottom in the specialisation order.
    \item
    For $\LSpec(\Z)$, the opens are the empty set, and the sets containing $0$ and all but finitely many of the non-zero primes. $0$ is top in the specialisation order.
    \item
    $\FSpec(\Z)$ is the one-point compactification of $\PSpec$, with topology generated by the other two. It is $T_1$ (specialisation order is discrete).
\end{itemize}

For $\ISpec$, we can extract part of that picture geometrically.

\begin{lemma}\label{lem:primesemiringideal}
  Let $\ISpec^+(\Z)$ be the (open) subspace of $\ISpec^+(\Z)$ comprising those prime ideals that contain a non-zero element.
  Then $\ISpec^+(\Z)\cong \PSpec$.
\end{lemma}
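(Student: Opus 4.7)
The plan is to show that the canonical map $\phi\colon \PSpec \to \ISpec^+(\Z)$ from Definition~\ref{def:PSpec}, sending a non-zero prime $p$ to the ideal $(p)$, is an isomorphism. Since $\PSpec \cong \N \cong \coprod_p 1$ is a discrete space, it suffices to exhibit $\ISpec^+(\Z)$ as a disjoint coproduct of terminal opens indexed by primes.

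First, I would decompose $\ISpec^+(\Z)$. By construction it is the open $\bigvee_{n \neq 0} V(n)$ of $\ISpec(\Z)$, where $V(n) = \{P : n \in P\}$ is sub-basic coZariski. Using $V(n) = V(-n)$, it suffices to restrict attention to $n > 0$. For each such $n$, the prime factorization $n = p_1 \cdots p_k$ is a finite, geometric construction; iterating the primeness axiom ``$ab \in P \Rightarrow a \in P \vee b \in P$'' gives $V(n) \leq V(p_1) \vee \cdots \vee V(p_k)$, while divisibility gives the reverse inclusion (since $p_i \mid n$ implies $p_i \in P \Rightarrow n \in P$). Hence $\ISpec^+(\Z) = \bigvee_{p \in \PSpec} V(p)$.

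Next, I would verify the $V(p)$ are pairwise disjoint. For distinct primes $p \neq q$, Bezout gives integers $a, b$ with $ap + bq = 1$, so $p, q \in P$ would force $1 \in P$, contradicting the axiom $1 \in P \Rightarrow \bot$. Hence $V(p) \wedge V(q) = \bot$ as subspaces. Then, I would show each $V(p) \cong 1$. On $V(p)$ a model is a prime ideal $P$ with $p \in P$. For any $n \in \Z$, Euclidean division gives $n = qp + r$ with $0 \leq r < p$ (geometric, since equality and order on $\Z$ are decidable). If $r > 0$, then $\gcd(r, p) = 1$ (as $p$ is prime and $r < p$), and Bezout again yields $1 \in P$, contradiction. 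Hence $r = 0$, so $n \in P \iff p \mid n$, uniquely determining $P = (p)$. Thus the theory axiomatising $V(p)$ has a unique model up to isomorphism, giving $V(p) \cong 1$.

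Combining, $\ISpec^+(\Z)$ is a disjoint union of terminal opens indexed by $\PSpec$, so $\ISpec^+(\Z) \cong \coprod_{p \in \PSpec} 1 \cong \PSpec$, and the iso is precisely $\phi$. The main obstacle is checking each step is genuinely geometric, not merely classical: the iterated primeness argument for $V(n) = \bigvee_i V(p_i)$ must be derived as a geometric sequent from the fixed finite factorization of each specific $n$, and the Bezout/Euclidean steps must only invoke decidable equality and order on $\Z$. All of these are available in geometric logic, after which the coproduct decomposition of $\ISpec^+(\Z)$ falls out.
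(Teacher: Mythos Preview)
Your proof is correct and rests on the same arithmetic core as the paper's: prime factorization to locate a prime in the ideal, and B\'ezout to pin the ideal down to $(p)$. The organisation, however, is different. The paper works in its preferred ``generic point'' style: it takes an arbitrary $\frap\in\ISpec^+(\Z)$, picks a non-zero $a\in\frap$, factorises $a$ to find some prime $p\in\frap$, and then for arbitrary $b\in\frap$ uses B\'ezout to get $\gcd(b,p)\in\frap$, whence $p\mid b$; this exhibits the inverse map $\frap\mapsto p$ directly. You instead work frame-theoretically, decomposing the open $\ISpec^+(\Z)$ as $\bigvee_p V(p)$, checking pairwise disjointness, and then arguing each $V(p)$ is terminal. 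Your route makes the coproduct structure explicit and stays close to the lattice of opens; the paper's route is shorter and illustrates the geometric-reasoning methodology set up in Section~\ref{sec:PtfreeSpaces}.

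Two small points of phrasing. In your $V(p)\cong 1$ step, the sentence ``Hence $r=0$'' should be read under the hypothesis $n\in P$ (otherwise it is false for general $n$); what you have actually derived is the pair of sequents $\top\vdash n\in P$ when $p\mid n$ and $n\in P\vdash\bot$ when $p\nmid n$. And it is this---that every basic proposition is decided in the theory of $V(p)$---that forces the locale to be terminal; merely having a unique global model would not suffice point-free, so you might phrase the conclusion that way rather than ``has a unique model up to isomorphism''.
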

\begin{proof}
    Let $\frap\in\ISpec(\Z)$ be a prime ideal containing a non-zero element $a$.
    We show that $\frap=(p)$ for some unique prime number $p\in \PSpec$.
    Since also $-a\in\frap$, we can assume without loss of generality that $a$ is positive. 
    By unique prime factorisation, we can represent $a$ as
    \[
      a = p_1^{\alpha_1}p_2^{\alpha_2}
                \dots p_n^{\alpha_n}\text{.}
    \]
    Now by primeness of $\frap$, we know there is some $p\in\PSpec$ dividing $a$ with $(p)\subseteq\frap$.
    
    To show that $\frap\subseteq (p)$, suppose $b\in\frap$.
    Recall that  B\'{e}zout's Identity can be (constructively) obtained from inverting the Euclidean Algorithm and performing the relevant substitutions. One can thus verify that there exist $m,n\in\Z$ such that
    \begin{equation}\label{eq:BezoutId}
        \gcd(b,p) = mb+ np \in \frap \text{.}
    \end{equation}
    Because $p$ is prime, that gcd must be either 1 or $p$,
    but 1 is not in $\frap$.   Hence $\gcd(b,p)=p$ and $b\in(p)$.
\end{proof}

\begin{remark} Classically, one typically proves that $\frap=(p)$ for some prime $p\in\mathbb{Z}$ by obtaining it as an easy corollary of the more general result that all ideals of $\mathbb{Z}$ are principal. However, proving the latter typically invokes the assumption that we can pick the least element of any non-trivial ideal $I\subset\mathbb{Z}$ (see, e.g. \cite[\S 3.7]{vdW1}), which is a non-geometric assumption since membership of $I$ is not decidable. 
\end{remark}	

\begin{remark}\label{rem:NATNUMprimeideals}
  Since a prime ideal of $\Z$ is determined by its positive elements, we might also consider a notion of prime ideal of $\N_+$. In fact, an analogue of Lemma~\ref{lem:primesemiringideal} also exists for prime ideals of the positive integers $\N_+$, with now:
    \begin{itemize}
        \item  Instead of \textit{non-trivial} prime ideals of $\Z$ we consider \textit{inhabited} prime ideals of $\N_+$;
        \item  We shall need to explicitly require that these prime ideals $\frap\in\ISpec(\N_+)$ are also closed under formal subtraction, that is:
        \[
          \forall m, n\in \N_+\text{ . }
            m\in\frap \wedge m+n\in\frap
              \rightarrow n\in \frap
          \text{.}
        \]
    \end{itemize}
    Then, the same argument works, so long as we avoid using the negative coefficients.
    The Euclidean algorithm, for finding the $\gcd$ of two natural numbers $a_0$ and $a_1$, relies on repeated integer division to produce a sequence of natural numbers with $a_{i-1} = ma_i + a_{i+1}$ and
    $0 \leq a_{i+1} < a_i$. This eventually hits $0$ at some $a_{n+1}$, with $a_n$ dividing $a_{n-1}$,
    and then $a_n = \gcd(a_0,a_1)$.
    The formal subtraction rule ensures that if $\frap$ contains $a_0$ and $a_1$ then it contains all the $a_i$s up to $a_n$.
    If $a_1$ is the prime $p$, then $a_n\in\frap$ must be either 1 -- which is impossible -- or $p$, so $p$ divides $a_0$.
\end{remark}

\section{Geometric Theories of Absolute Values}
\label{sec:GeomThAV}
Our aim is to examine the notion of absolute value when using upper reals instead of Dedekinds, and our basic general notion will be as follows.

\begin{definition} [Absolute Values, valued in upper reals]
\label{def:toposabsvUPPER}
  Let $R$ be a commutative ring (discrete, not topological).
  An \emph{absolute value} on $R$ is a map
  $\gav\colon R \to \overleftarrow{[0,\infty)}$ satisfying the following conditions:
  \[
    \begin{array}{ll}
      |0| = 0 \\
      |1| = 1 \\
      |xy| = |x||y| & \text{(Multiplicativity)} \\
      |x+y| \leq |x|+|y| & \text{(Triangle Inequality)}
    \end{array}
  \]
  We write $\Alav{R}$ for the geometric theory of absolute values on $R$, $[\Alav{R}]$ for the corresponding point-free space.  We also write $\lav$ for $\Alav{\Z}$. 
\end{definition}

We have expressed this in a form that brings out the mathematics of absolute values rather than the geometric logic.
Nonetheless, there is a geometric theory implicit in the definition. The signature has two sorts, for $R$ and for the positive rationals $\Q_+$, and a predicate $\phi$ on $R\times\Q_+$ with $\phi(x,q)$ expressing $|x|<q$.
Then the conditions listed, together with the requirement for $|x|$ to be an upper real, can all be expressed geometrically.

From the conditions, we can also derive the following.
\begin{itemize}
\item
  $|-1| = 1$. 
  
  \noindent [Why? We have $|-1|^2 = |(-1)^2| = 1$, and the only solution in the non-negative upper reals is 1.]
\item
  $|-x| = |-1||x| = |x|$.
\end{itemize}

\begin{definition} \label{def:avDed}
  A \emph{Dedekind} absolute value $\gav$ on $R$ is one that factors via $[0,\infty)$.
  We write $\ADav{R}$ for the theory of Dedekind absolute values on $R$.
  
  A Dedekind absolute value is \emph{positive definite} if it satisfies the axiom
  \[
    \forall x\in R \,(\top \rightarrow x=0 \vee |x|>0)
    \text{.} 
  \]
  We write $\ADavpos{R}$ for the theory of positive definite Dedekind absolute values on $R$.
\end{definition}

\begin{discussion}[Embeddings vs. Monics]\label{dis:Dedtouppermonic}
    Geometrically, we need to distinguish between properties defined by geometric sequents and those that also rely on uniquely defined structure.
    The former define subspaces, and corresponding embeddings, while the latter define maps of a more general kind: monics, because of uniqueness of the extra structure, but not embeddings, because the extra structure changes the topology by introducing more opens.
    
    Thus $[\ADavpos{R}]$ is a subspace of $[\ADav{R}]$. By contrast, to say an upper real is Dedekind relies on \emph{structure,} the lower part.
    The map $[\ADav{R}] \to [\Alav{R}]$ that forgets the lower parts is monic, because two Dedekind reals are equal iff their right sections are equal, but not an embedding, because $[\ADav{R}]$ has a stronger topology, with extra opens provided by those lower parts.
\end{discussion}

\begin{observation}\label{obs:seminormsUPPER} The choice of axioms in $\Alav{R}$ reflects the topological constraints the upper reals puts on the algebra. In particular, notice:
	\begin{enumerate}[label=(\roman*)]
		\item
		Without the Dedekind property, $|x|>0$ is not available as a geometric formula, and our description of positive definiteness is not geometric.
		We might try instead to formulate it as
		\[
		\forall x\in R((\forall q\in\Q_+ \, |x|<q) \rightarrow x=0) \text{,}
		\]
		but the $\forall q$ there, nested inside a sequent, is not a geometric formula.
		Hence our Definition~\ref{def:toposabsvUPPER} does not embody positive definiteness.

		Put otherwise, the topology of the upper reals forces us to work with the multiplicative \emph{seminorms} for $R$, as opposed to the usual norms. 
		\item In the classical setting, the property $|1|=1$ can be derived from the others using positive definiteness.
		For multiplicativity gives us $|1|\cdot |1|=|1|$, which implies $|1|\cdot (|1|-1)=0$. Positive definiteness tells us that $|1|\neq 0$, and thus $|1|=1$. 
	\end{enumerate}
\end{observation}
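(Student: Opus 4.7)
The plan is to justify the two items of the Observation by appealing directly to the geometric logic set out in Definition~\ref{def:geomlogic} and the topology of $\overleftarrow{[0,\infty)}$ as described in Section~\ref{sec:UpperReals}.

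For (i), the first task is to rule out ``$|x|>0$'' as an open condition on an upper real $r=|x|\in\overleftarrow{[0,\infty)}$. I would argue via the specialisation order: by the paper's convention, the specialisation order on upper reals is numerical $\geq$, and the basic opens are $\{r:r<q\}$ for $q\in\Q_+$. Hence $0$ (numerically the smallest non-negative value) is the specialisation top and lies in every non-empty basic open. Any geometric formula in one upper real variable defines an open that must therefore contain $0$ as soon as it contains any point, so the set $\{r:r>0\}$ cannot arise from a geometric formula. This both kills the naive formulation ``$|x|>0$'' and motivates the second attempt in the Observation. For the latter, I would appeal to the syntactic restriction in Definition~\ref{def:geomlogic}: a geometric formula is built from $\top,=,\wedge,\bigvee,\exists$, and $\forall$ is only allowed as the outer binder in a geometric sequent $\forall\vec{x}(\phi\to\psi)$. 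The candidate
\[\forall x\in R\bigl((\forall q\in\Q_+\,|x|<q)\to x=0\bigr)\]
has a $\forall q$ sitting inside the antecedent $\phi$, and so is outside the geometric fragment. I would conclude by remarking that both failures are essentially one and the same: both attempts try to bind a universal quantifier over rational approximations, which is geometrically unavailable precisely because $0$ is specialisation top.

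For (ii), I would simply reproduce the given classical calculation as a short verification: multiplicativity yields $|1|=|1\cdot 1|=|1|^2$, hence $|1|(|1|-1)=0$ in the Dedekind reals (where subtraction is available). Positive definiteness applied to the algebraic fact $1\neq 0$ gives $|1|>0$, and in particular $|1|\neq 0$, so cancellation in $[0,\infty)$ forces $|1|=1$. The observation's point is then the contrapositive: in Definition~\ref{def:toposabsvUPPER} we are working with upper reals, where subtraction is not even available (as emphasised before Theorem~\ref{thm:exponentiation}) and where positive definiteness is geometrically inexpressible by item~(i); both ingredients of the derivation fail, so $|1|=1$ must be imposed as an axiom rather than derived.

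The main subtlety is really conceptual rather than technical: the reader has to be convinced that the failure of ``$|x|>0$'' is not a superficial matter of finding a cleverer formula but is forced by the fact that the would-be open is not upwards-closed in the specialisation order. Once this is accepted, item~(ii) is a one-line algebraic check, and the Observation as a whole amounts to a clean demarcation of what the upper-real framework can and cannot express.
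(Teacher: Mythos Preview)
Your proposal is correct and follows the same line as the paper, which gives the justification inline rather than in a separate proof; your specialisation-order argument for (i) is a clean elaboration of what the paper merely asserts, and your treatment of (ii) reproduces the paper's calculation while adding the useful remark that subtraction is unavailable for upper reals.
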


The standard examples of absolute values can all be reworked to obey Definition~\ref{def:toposabsvUPPER}, which we sketch below. The syntactic details have been suppressed for readability, but one easily checks that the definitions only use arithmetic operations that are well-defined on the upper reals and satisfy the required axioms.  

\begin{example}\label{ex:standardexamples}
	We give some standard examples on $\Z$.
	Since equality there is decidable, it suffices to define $\gav$ for $n\neq 0$ since we already require that $|0|=0$ by definition.
	\begin{enumerate}[label=(\roman*)]
		\item
		The \emph{trivial absolute value} on $\Z$, denoted $\gav_0$, is defined as 
		\[|n|_0=1, \qquad\qquad\text{for all}\,\, n\neq 0.\]
		\item
		The \emph{Euclidean absolute value} on $\Z$, denoted $\gav_\infty$, is defined as the usual norm
		\[|n|_{\infty}=n, \qquad\qquad\text{for all}\,\, n\in \N_+,\]
		which is extended to the negative integers by the fact $|n|=|-n|$ for all $n\in\Z$.
		\item
		Fix some prime $p\in\mathbb{N}_+$.	By unique prime factorisation, any non-zero integer $n\in\Z_{\neq 0}$ can be represented as $n=p^rz$, where $r\in\N$, $z\in\Z$ and $\gcd(p,z)=1$. As such, define the \textit{$p$-adic ordinal}
		\[\ord_{p}(n):=\max\{r\in\mathbb{N} \,\big| \, p^r \,\text{divides} \,\, n\}.\]
		The canonical \emph{$p$-adic absolute value} on $\Z$ is then defined as
		\[|n|_p =p^{-\ord_p(n)} \,\,\,\, \qquad\text{for all} \,\, n\neq  0.\]
		\item
		Our final example here is admitted by our use of seminorms.
		Let $p\in\N_+$ be prime.
		The \emph{$p$-characteristic absolute value} on $\Z$ is then defined as
		\[
		  \pchav{n}{p} =
		  \begin{cases}
            0 & \text{if $p$ divides $n$,} \\
            1 & \text{if $p$ does not divide $n$.}
		  \end{cases}
		\]
	\end{enumerate}
\end{example}

Further examples can be obtained by exponentiating -- in fact, the classical Ostrowski's Theorem says these are the only examples.
For our setting the details are explained more precisely in Theorem~\ref{thm:ostrowskiN}.
For the moment, let us say roughly that $\gav_\infty^\lambda$ is an absolute value if $0\leq\lambda\leq 1$, with $\gav_\infty^0 = \gav_0$. Also, $\left( p^{\ord_p(n)}\right)^\lambda$ gives an absolute value for $-\infty \leq \lambda \leq 0$, with $\gav_p$ for $\lambda=-1$ and $\pchav{\cdot}{p}$ for $\lambda=-\infty$.

\begin{remark}\label{rem:toposabsvNATNUM} Why not go one step further and work with the upper-valued absolute values on $\N$? After all, as Proposition~\ref{prop:avN} will show, upper-valued absolute values on $\Z$ themselves are determined by their values on the positive integers $\N_+$. A fuller answer will be presented in Discussion~\ref{dis:FormalSubissues}.
For now, let us say that proving Ostrowski's Theorem without additive inverses results in certain technical difficulties which incline us to stick with $\Z$. 

\begin{proposition} \label{prop:avN}
  Absolute values over $\Z$ can equivalently be defined as
  maps $\gav\colon\N \to \overleftarrow{[0,\infty)}$
  satisfying $|1|=1$, multiplicativity, and the Triangle Inequality (quantified over $\N$, of course),
  together with the \emph{Subtractive Triangle Inequality}
  \[
    |m| \leq |m+n| + |n| \quad\text{ for all $n\in\N$}
    \text{.}
  \]
\end{proposition}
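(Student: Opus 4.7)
The plan is to prove the two directions of this equivalence.

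For the forward direction ($\Rightarrow$), I restrict a given absolute value $\gav\colon\Z\to\overleftarrow{[0,\infty)}$ to $\N$. The axioms $|0|=0$, $|1|=1$, multiplicativity, and the standard Triangle Inequality are inherited directly. The Subtractive Triangle Inequality then follows by writing $m=(m+n)+(-n)$, applying the $\Z$-triangle inequality, and using the identity $|-n|=|n|$ derived immediately after Definition~\ref{def:toposabsvUPPER}.

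For the backward direction ($\Leftarrow$), given a map $\gav\colon\N\to\overleftarrow{[0,\infty)}$ satisfying the four listed axioms, I extend it to $\Z$ by exploiting the decidability of the sign predicate on $\Z$: set $|0|:=0$, keep $|n|$ for $n\in\N_+$, and put $|-n|:=|n|$ for $n\in\N_+$. By Lemma~\ref{lem:casesplitting} applied to the decidable partition of $\Z$ into zero, positive, and negative parts, this yields a bona fide map in $\Loc$. The axioms $|0|=0$ and $|1|=1$ are immediate, and multiplicativity $|xy|=|x||y|$ on $\Z$ reduces, via a case-split on the signs of the two factors, to multiplicativity on $\N$ together with the identity $|-n|=|n|$ (using that $0\cdot a=0$ in the non-negative upper reals for the cases where one factor is zero).

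The only substantive verification is the Triangle Inequality on $\Z$. When $x$ and $y$ have the same sign (or one is $0$), this reduces immediately to the Triangle Inequality on $\N$. The genuine work happens when $x$ and $y$ have opposite signs, say $x=a$ and $y=-b$ with $a,b\in\N_+$: I case-split (decidably on $\N$) on $a\geq b$ versus $b\geq a$. In the first case, $x+y=a-b\in\N$, and applying the Subtractive Triangle Inequality with $m:=a-b$ and $n:=b$ gives $|x+y|=|a-b|\leq |(a-b)+b|+|b|=|a|+|b|=|x|+|y|$; the case $b\geq a$ is symmetric. This opposite-signs case is precisely where the Subtractive Triangle Inequality earns its keep, which explains why it is needed as an extra axiom: the standard $\N$-triangle inequality alone is insufficient to recover the $\Z$-triangle inequality after the extension. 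The main obstacle is keeping all the case-split constructions geometric, and this is handled precisely by the decidability of sign on $\Z$ and of order on $\N$.
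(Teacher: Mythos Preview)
Your proof is correct and follows essentially the same route as the paper's: both extend by the sign case-split and reduce the $\Z$-triangle inequality to either the $\N$-triangle inequality or the subtractive inequality via a case analysis on signs (the paper organises this slightly differently, by first observing that of the three magnitudes $|m|_\infty$, $|n|_\infty$, $|m+n|_\infty$, one is always the sum of the other two). One caveat: your appeal to Lemma~\ref{lem:casesplitting} to \emph{construct} the extended map is not quite right, since that lemma only verifies that an already-defined map factors through a subspace, not that a map can be defined by cases (cf.\ Discussion~\ref{dis:casesplittingGEOMETRICPROOF}); the extension is legitimate simply because sign on $\Z$ is decidable, so $\Z$ is a genuine coproduct and the universal property applies.
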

\begin{proof}
  If an absolute value on $\Z$ is given, then it must satisfy the subtractive inequality when restricted to $\N$.
  (In fact, the usual and subtractive inequalities are \emph{equivalent} when applied to the whole of $\Z$.)
  
  Now suppose we are given $\gav$ on $\N$ satisfying the subtractive law.
  Clearly $\gav$ must be extended to $\Z$ by $|0|=0$ and, for $n>0$, $|-n|=|n|$.
  The notation will be less confusing if we temporarily write $\Mag {n}$ for the \emph{magnitude} of $n$, i.e. $|n|_{\infty}$ from Examples~\ref{ex:standardexamples}.
  Then $|n| = |\Mag {n}|$.
  
  The only non-obvious question is whether it satisfies the Triangle Inequality on the whole of $\Z$.
  Suppose, then, we are given $m,n\in\Z$.
  
  We claim that, of the three magnitudes of $m,n,m+n$, one is the sum of the other two.
  Note that if it is true for $m$ and $n$, then it is also true for $-m$ and $-n$: because the three magnitudes are unchanged.
  
  Let us consider how many of $m,n,m+n$ are negative.
  
  If none, then the magnitudes also are $m,n,m+n$, so the claim holds. Hence it also holds if all three are negative.
  
  Now suppose just one is negative. It cannot be $m+n$, so, without loss of generality, we can take it to be $m$.
  Then the three magnitudes are $-m, n, m+n$ and the claim holds. Hence it also holds if two are negative.
  
  Now let us return to the Triangle Inequality for $\Z$, which reduces to
  \[
    |\Mag{(m+n)}| \leq |\Mag {m}| + |\Mag {n}|
    \text{.}
  \]
  If $\Mag{(m+n)}$ is the sum of the other two, then this is an instance of the Triangle Inequality for $\N$, while if $\Mag {m}$ or $\Mag {n}$ is the sum of the other two then it is the Subtractive Inequality. 
\end{proof}
	
	
\end{remark}

Next, let us recall the definition of a geometric field:

\begin{definition}\label{def:geometricfield} A discrete ring $R$ is called a \textit{geometric field} if it satisfies the following:
	\begin{enumerate}[label=(\roman*)]
		\item $0\neq 1$
		\item For any $x\in R$, either $x=0$ or $ \exists x^{-1}\in R$ such that $x\cdot x^{-1}=1$.
	\end{enumerate}
\end{definition}	

\begin{remark}\label{rem:geomfield}
  Classically, Definition~\ref{def:geometricfield} is a plain statement of what it means to be a field.
  Geometrically, the requirement for $R$ to be discrete is important, as the definition does not work for a topological field such as $\R$.
  In fact, it is easy to show that a geometric field must have decidable equality~\cite[Lemma 2.1]{JohnstoneSpectra}.
\end{remark}	

\begin{proposition} \label{prop:fieldav}
	Let $R$ be a geometric field.
	Any absolute value $\gav$ on $R$ is Dedekind and positive definite.
	
	In fact, $[\Alav{R}] \cong [\ADavpos{R}]$. 
\end{proposition}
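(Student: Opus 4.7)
The plan is to leverage the decidable equality of the geometric field $R$ (Remark~\ref{rem:geomfield}) by case splitting on whether each $x \in R$ is zero. For $x = 0$, $|x| = 0$ trivially satisfies positive definiteness and is Dedekind. For $x \neq 0$, the inverse $x^{-1} \in R$ together with multiplicativity yields $|x| \cdot |x^{-1}| = |1| = 1$, and the key idea is to use the upper real $|x^{-1}|$ to reconstruct a lower approximation of $|x|$: for each rational $r$, declare $r \in L_{|x|}$ iff $r \leq 0$, or $r > 0$ and $|x^{-1}| < 1/r$. The latter is an open condition on the upper real $|x^{-1}|$, so the prescription is geometric and upgrades the upper-real datum $|x|$ to a Dedekind one.

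Next I would verify the Dedekind cut axioms. Roundedness is immediate. Disjointness: if $r > 0$ satisfies both $|x^{-1}| < 1/r$ and $|x| < r$, then $|x| \cdot |x^{-1}| < 1$, contradicting $|x| \cdot |x^{-1}| = 1$. The substantive obstacle is locatedness, i.e. showing that for all rationals $0 < r < s$ the disjunction $(r < |x|) \vee (|x| < s)$ holds throughout $[\Alav{R}]$. Since $s/r > 1$ lies in the upper part of $|x| \cdot |x^{-1}| = 1$, unpacking the product of upper reals produces rational witnesses $q_1 > |x|$ and $q_2 > |x^{-1}|$ with $q_1 q_2 \leq s/r$. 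One then case splits on the decidable rational inequality $q_1 < s$ versus $q_1 \geq s$: in the first case $|x| < s$ follows by upward closure of the upper part of $|x|$; in the second, $q_2 \leq s/(r q_1) \leq 1/r$, so $|x^{-1}| < q_2 \leq 1/r$ gives $r < |x|$. Positive definiteness on $x \neq 0$ is then immediate: since $|x^{-1}| \in \overleftarrow{[0,\infty)}$ is finite, some rational $M$ bounds it above, so $1/M < |x|$.

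For the isomorphism $[\Alav{R}] \cong [\ADavpos{R}]$, the construction above supplies a map $[\Alav{R}] \to [\ADavpos{R}]$ inverse to the forgetful monic of Discussion~\ref{dis:Dedtouppermonic}: one composition is the identity on upper parts by construction, and the other recovers the original lower part because on a positive Dedekind $|x|$ one has $|x^{-1}| = 1/|x|$, whence $|x^{-1}| < 1/r$ is equivalent to $r < |x|$ for $r > 0$. I expect the locatedness verification to be the main obstacle; the remaining checks are essentially bookkeeping on the arithmetic of upper reals.
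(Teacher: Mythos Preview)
Your proposal is correct and essentially the same as the paper's proof: both define the lower section of $|x|$ geometrically via the condition $|x^{-1}| < 1/r$ (using the field axiom to obtain $x^{-1}$), derive disjointness from $|x||x^{-1}|=1$, and prove locatedness by unpacking the product $|x||x^{-1}| < s/r$ into rational witnesses and then case-splitting on a decidable rational inequality. The only cosmetic difference is that the paper folds the $x=0$ versus $x\ne 0$ case-split into the definition itself (by quantifying existentially over an inverse $y$), whereas you invoke decidable equality up front; and your ``roundedness is immediate'' at $r=0$ quietly relies on the finiteness of $|x^{-1}|$ that you only make explicit later.
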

\begin{proof}
	Suppose $x\in R$.
    If $q$ is rational, then we can define $q<|x|$ when $q<0$, or there are $y\in R$ with $xy=1$, \emph{and} $r'\in\Q$ with $|y|<r'$ (so $0<r'$) and $q<r^{\prime-1}$.
    Clearly this defines a lower real; we must show that, together with the given relation $|x|<r$, it defines a Dedekind.
    
    For disjointness, suppose $q<|x|$ (with $y$ and $r$ as above), and $|x|<q$. Then $1=|x||y| < qr'<1$, a contradiction.
    
    For locatedness, suppose $a<b$ are rationals.
    If $a<0$ then $a<|x|$ by definition.
    If $a=0$, then replace $a$ by $b/2$. Hence we can assume $0<a$.
    Now if $x=0$ then $|x|=0 < a<b$,
    while if $x$ has an inverse $y$, then $|x||y| = 1 < b/a$, so we can find $r,r'$ with $|x|<r$, $|y|<r'$,  and $rr'= b/a$.
    If $r\leq b$ then $|x|<b$ and we are done. If $b<r$ then $rr' < r/a$, so $a < 1/r'$ and $a<|x|$.
    
	
	Positive definiteness follows from the fact that if $xy=1$ then $|x| = |y|^{-1} > 0$.
	
	We have now shown that, from an absolute value, we can define the unique structure needed to make it Dedekind and positive definite. Roughly speaking the extra opens $q < |x|$ are provided using $|x^{-1}|<q^{-1}$.
	This gives the map $[\Alav{R}] \to [\ADavpos{R}]$, and it is clearly inverse to the forgetful map in the opposite direction.
\end{proof}

\subsection{Archimedean vs. Non-Archimedean Absolute Values on $\Z$}

\begin{definition}\label{def:upperAvsNA}
  Let $\gav$ be an absolute value on $\Z$.
  We shall write $\Znq$ for the set of non-negative integers.
  \begin{enumerate}[label=(\roman*)]
  \item
    $\gav$ is \emph{non-Archimedean (NA)} if $|n|<1$ for some $n\in \Znq$.
    
    Immediately we see that the non-Archimedeans form an open subspace of $[\lav]$ -- it's the join of the open subspaces $|n| <1$.
    We write $[\lav_{NA}]$ for it.
  \item
    $\gav$ is \emph{Archimedean (A)} if $1\leq |n|$ for every $n\in \Znq$.
    
    This is a meet of closed subspaces for $1 \leq |n|$, and in fact is the closed complement of $[\lav_{NA}]$.
    We write $[\lav_A]$ for it.
  \item
    $\gav$ is \emph{ultrametric} if
    $|m+n|\leq \max\{|m|, |n|\}$ for every $m,n\in \Znq$.
    
    The ultrametrics do form a subspace of $[\lav]$, which we shall write $[\lav_U]$, but it is neither open nor closed. It is the meet of an infinite family of subspaces described by the sequent
    \[
      \forall m,n\in\Z, q\in\Q(|m|<q \wedge |n| <q
        \rightarrow |m+n| < q)
      \text{.}
    \]
  \end{enumerate}
\end{definition}

\emph{We have departed from the usual convention here.} Some guiding observations:
\begin{itemize}
	\item Normally $\gav$ would be considered Archimedean only if it has some $|n| > 1$, but we cannot express that for $|n|$ as upper real.
	For us, the trivial absolute value has been reassigned as Archimedean, though this does not make a huge difference to the classical account, insofar as that frequently excludes the trivial absolute value as a special case.
	\item 
	Notice: the trivial absolute value is still ultrametric, so ultrametric and non-Archimedean are no longer synonymous.
	(In Lemma~\ref{lem:NAisUltra}, however, we shall prove that every non-Archimedean is ultrametric, as expected.)
	\item 
	The fact that $[\lav_{NA}]$ and $[\lav_{A}]$ are now open and closed complements makes the mathematics significantly smoother for us.
	In particular, it enables us to use (via Lemma~\ref{lem:casesplitting}) case-splitting techniques that, more generally, are invalid geometrically.
	
\end{itemize}

\section{Preliminary lemmas for absolute values on \texorpdfstring{$\Z$}{Z}}
\label{sec:LemmasAV}
In this Section we gather together some preliminary lemmas that are explicit or implicit in the treatment of~\cite{vdW2}.

The first is adjusted for upper reals, lacking the ability to divide.

\begin{lemma}\label{lem:ostrowineq}
  Let $\alpha,\beta$ be positive Dedekinds, and $\gamma,\gamma'$ be non-negative upper reals such that
  \[
    \gamma^v\leq (\alpha v + \beta)\cdot (\gamma')^{v}
  \]
  for all $v\in\mathbb{N}_+$.
  Then $\gamma\leq \gamma'$.
\end{lemma}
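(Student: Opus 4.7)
The plan is to unwind the conclusion $\gamma \leq \gamma'$ into its defining sequents and then exploit the fact that exponentials eventually dominate polynomials. By the Convention on specialisation order, $\gamma\leq\gamma'$ as upper reals is the sequent schema $\gamma' < q \vdash \gamma < q$, one sequent for each rational $q$. So fix a rational $q$ with $\gamma'<q$; since $\gamma'$ is non-negative we may take $q>0$, and we must deduce $\gamma<q$.

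Roundedness of the upper part of $\gamma'$ provides a rational $s$ with $\gamma'<s<q$. Monotonicity of multiplication on non-negative upper reals then gives $(\gamma')^v \leq s^v$, and multiplying by the positive Dedekind $\alpha v + \beta$ preserves order, so the hypothesis yields
\[
  \gamma^v \;\leq\; (\alpha v + \beta)(\gamma')^v \;\leq\; (\alpha v + \beta)\, s^v
\]
for every $v\in\N_+$. Choosing rational bounds $A>\alpha$ and $B>\beta$ (available from the upper sections of the Dedekinds), we have $\alpha v + \beta < Av + B$, so it suffices to find $v$ with $Av + B < (q/s)^v$; such $v$ exists because $q/s>1$ makes $(q/s)^v$ grow exponentially while $Av+B$ grows only linearly. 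For that $v$ we obtain $(\alpha v + \beta)\, s^v < q^v$ as Dedekinds, and hence $\gamma^v < q^v$.

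It remains to extract $\gamma<q$ from $\gamma^v<q^v$ without invoking a $v$-th root map. Unfolding the product definition of non-negative upper reals, $\gamma^v<q^v$ means there exist non-negative rationals $r_1,\dots,r_v$ with $\gamma<r_i$ for each $i$ and $r_1\cdots r_v<q^v$. Let $r=\min_i r_i$: then $r$ equals one of the $r_i$ so $\gamma<r$, while $r^v \leq r_1\cdots r_v<q^v$ forces $r<q$ as rationals. Hence $\gamma<r<q$, completing the sequent.

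The main subtlety is precisely this last step: lacking a continuous $v$-th root on upper reals, the implication $\gamma^v<q^v \Rightarrow \gamma<q$ cannot be obtained by inverting exponentiation and must instead be read off directly from the definition of the $v$-fold upper-real product. Everything else is the standard Ostrowski growth comparison, with only light bookkeeping required to keep each manipulation compatible with the monotonicity constraints imposed by upper-real arithmetic (no subtraction, no division, no strict inequalities against non-rational upper reals).
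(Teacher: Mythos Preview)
Your proof is correct. Both arguments hinge on the same growth comparison (exponential beats linear), but they organise the reduction differently. The paper first reduces $\gamma\leq\gamma'$ to showing $\gamma\leq(1+\delta)\gamma'$ for every rational $\delta>0$, then for a fixed $\delta$ uses the binomial estimate $(1+\delta)^v > \alpha v + \beta$ (for large $v$) to obtain $\gamma^v \leq ((1+\delta)\gamma')^v$, and finally appeals to \cite[Lemma~2.12]{NV} for the fact that $(-)^v$ reflects $\leq$ on non-negative upper reals. You instead work directly with the sequent form of $\leq$: fix a rational bound $q$ on $\gamma'$, use roundedness to insert a gap $s<q$, carry out the growth comparison entirely in the rationals to get $\gamma^v < q^v$, and then prove the order-reflection of $(-)^v$ inline by unfolding the $v$-fold product and taking the minimum of the witnessing rationals. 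Your route is more self-contained (no external citation needed) and stays close to the pointwise definition of the specialisation order; the paper's $(1+\delta)$-formulation is a little slicker at the level of upper-real arithmetic and packages the order-reflection step as a reusable lemma rather than re-deriving it.
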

\begin{proof}
  First, a basic but key observation: if $\gamma,\gamma'\in\overleftarrow{[0,\infty]}$  such that $\gamma\leq (1+\delta)\gamma'$ for all positive rationals $\delta$, then this implies $\gamma\leq \gamma'$.
  It thus suffices to prove that the Lemma's hypothesis implies $\gamma\leq(1+\delta)\gamma'$, for all positive rationals $\delta$.

	Fix such a rational $\delta>0$. Binomial expansion yields the inequality $(1+\delta)^v\geq 1+v\delta + \frac{v(v-1)}{2}\delta^2 $ for any integer $v\geq 2$. It is clear that for sufficiently large $v$, we get
	\[v\delta >\beta \qquad \text{and} \qquad \frac{1}{2}(v-1)\delta^{2}>\alpha,\]
	and so
	\begin{equation}\label{eq:ostrowineq}
		\gamma^v\leq (\alpha v +\beta )\cdot (\gamma')^v 
          <(1+\delta)^v\cdot (\gamma')^v
          =\left((1+\delta) \gamma'\right)^v.
	\end{equation}
	By~\cite[Lemma 2.12]{NV}, $\argu^v$ reflects non-strict order on non-negative upper reals:
    in other words, from Equation~\eqref{eq:ostrowineq} we can deduce $\gamma\leq (1+\delta)\gamma'$ as required.
\end{proof}	

The second Lemma encapsulates a formula in the main proof in~\cite{vdW2}.
We have clarified its role by varying the base of the logarithms.

\begin{lemma}[Fundamental Lemma for Ostrowski]
\label{lem:ostrowbound} 
	Let $a,b>1$ be any pair of integers greater than 1, and let $\gav$ be any absolute value on $\Z$. 
	
	Then:
	\begin{enumerate}[label=(\roman*)]
		\item $\log_a |a|\leq \max\{0,\log_{b}|b|\} \leq 1$
		\item $\max\{0,\log_b|b|\}=\max\{0,\log_a|a|\}$ for any $a,b>1$.
	\end{enumerate}
	In particular, we can associate a constant upper real $M_{\gav}:=\max\{0,\log_b|b|\} \leq 1$ to any absolute value $\gav$ since by (ii) we know $M_{\gav}$ is independent of our choice of $b>1$.
\end{lemma}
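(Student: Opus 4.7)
The plan is to adapt the classical argument of \cite{vdW2}, with modifications to accommodate $|b|$ being only an upper real. First I would fix an integer $n\geq 1$ and expand $a^n$ in base $b$ as $a^n = \sum_{i=0}^{r_n} c_i b^i$, where $0\leq c_i\leq b-1$ and $r_n = \lfloor n\log_b a\rfloor$; this is a geometric construction by repeated division. The triangle inequality, multiplicativity, and the elementary bound $|m|\leq m$ for $m\in\mathbb{N}$ (an easy induction using $|m+1|\leq |m|+|1| = |m|+1$) then give
\[
|a|^n \;\leq\; \sum_{i=0}^{r_n}(b-1)|b|^i.
\]

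The central difficulty is that $|b|$ is only an upper real, so I cannot case-split on $|b|<1$ versus $|b|\geq 1$. My strategy is to bypass this by introducing an auxiliary rational $q$ satisfying the open condition $|b|<q$ together with $q\geq 1$. For any such $q$, one has $|b|^i\leq q^i\leq q^{r_n}$ and, combining with $r_n\leq n\log_b a$,
\[
|a|^n \;\leq\; b(r_n+1)\, q^{r_n} \;\leq\; \bigl(b\log_b a\cdot n + b\bigr)\bigl(q^{\log_b a}\bigr)^n.
\]
Setting $\alpha:=b\log_b a$ and $\beta:=b$ (positive Dedekinds), and taking $\gamma:=|a|$ with $\gamma':=q^{\log_b a}$ (a non-negative Dedekind by Theorem \ref{thm:exponentiation}), Lemma \ref{lem:ostrowineq} delivers $|a|\leq q^{\log_b a}$.

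Next I would apply $\log_a$, which is monotone as the inverse of the exponential from Theorem \ref{thm:exponentiation}, and invoke the change-of-base identity $\log_b a\cdot \log_a q = \log_b q$, yielding $\log_a|a|\leq \log_b q$ for every rational $q\geq 1$ with $|b|<q$. Since upper reals admit arbitrary infima (Remark \ref{rem:infsupDed}) and $\log_b$ is monotone, taking the infimum over such $q$ gives
\[
\log_a|a|\;\leq\;\log_b\max\{1,|b|\} \;=\; \max\{0,\log_b|b|\},
\]
which is the first inequality of (i). The bound $\max\{0,\log_b|b|\}\leq 1$ follows from $|b|=|1+\cdots+1|\leq b$ by the triangle inequality. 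For part (ii), the monotonicity and idempotence of $\max\{0,\argu\}$ applied to (i) give $\max\{0,\log_a|a|\}\leq \max\{0,\log_b|b|\}$; interchanging the roles of $a$ and $b$ supplies the reverse inequality, hence equality. The main obstacle throughout is the management of $|b|$ as an upper real without decidable comparisons against $1$; the resolution is to parametrise by rationals $q>|b|$ with $q\geq 1$, derive a Dedekind-valued bound for each $q$, and only at the end collapse to a bound in terms of $\max\{1,|b|\}$ via an infimum.
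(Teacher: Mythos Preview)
Your proof is correct and follows the same classical base-$b$ expansion strategy as the paper. The one genuine difference lies in how you manage the upper-real $|b|$: you introduce an auxiliary rational $q\geq 1$ with $|b|<q$, derive $|a|\leq q^{\log_b a}$ for each such $q$, and only at the end collapse via an infimum to $\max\{1,|b|\}$. The paper avoids this detour by working directly with the upper real $B:=\max\{1,|b|\}$ from the outset: since $|b|\leq B$ and multiplication of non-negative upper reals is monotone, one has $|b|^i\leq B^i\leq B^r$ immediately, and Lemma~\ref{lem:ostrowineq} applies with $\gamma'=B^{\log_b a}$ (which is allowed to be an upper real). Your remark that ``the main obstacle throughout is the management of $|b|$ as an upper real without decidable comparisons against $1$'' overstates the difficulty: $\max\{1,|b|\}$ is a perfectly good upper real and no case-split is needed to form it or compute with it. Your rational-parameter approach is a valid alternative, and it has the minor advantage that each intermediate bound $q^{\log_b a}$ is Dedekind, but the paper's route is shorter and shows that the upper-real arithmetic already suffices.
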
	
\begin{proof}
	(i):
	First, note that for any $n\in\N_+$, the triangle inequality yields
    \begin{equation}\label{eq:triangleineq}
	  |n|=|\underbrace{1+1+\dots +1}_{n}| \leq\underbrace{|1|+|1|+\dots +|1|}_{n}= n.
    \end{equation}
	Hence, if $b > 1$, we have $\log_b |b| \leq 1$.
	
	Now, given any pair of integers $a,b$ such that $a,b>1$, and given any $v\in\mathbb{N}_+$, we may expand $a^v$ in powers of $b$ as follows:
	\begin{equation}\label{eq:decomposition}
		a^v= c_0 + c_1b + ...  + c_{r}b^{r}
	\end{equation}
	where $0\leq c_i <b$ for $0\leq i\leq r$, and $c_r\neq 0$. It is obvious that
	\[b^r\leq a^v,\]
	which (taking $\log_b\argu$ on both sides) yields
	\begin{equation}\label{eq:nbound}
		r\leq \log_b a^v = v\log_ba.
	\end{equation}
	Hence, Equation~\eqref{eq:decomposition} and the triangle inequality give
	\begin{align*}\label{eq:bound}
		|a^v|&\leq |c_0| + |c_1||b| + ... + |c_n||b|^{r}\\
		&\leq b(1+|b|+\dots + |b|^r) \leq b(r+1) B^r
	\end{align*}
	where $B=\max\{1,|b|\}$. By Equation~\eqref{eq:nbound}, we get
	\[
      |a|^v=|a^v|
        < b (v\log_ba + 1)\cdot (B^{\log_b a})^{v}
      \text{.}
    \]
	Applying Lemma~\ref{lem:ostrowineq}, this yields
	\[|a|\leq B^{\log_ba},\]
	which in turn yields
	\begin{align*} 
		\log_a |a|&\leq \max\{0,\log_a |b|^{\log_b a}\}\\
		&=\max\{0, \log_b a^{\log_a|b|}\}=\max\{0,\log_b|b|\}.
	\end{align*}
	
	To prove (ii), note that (i) yields for any pair of integers $a,b>1$
	\[\max\{0,\log_a |a|\}\leq \max\{0,\max\{0,\log_b|b|\}\}=\max\{0, \log_b|b| \},\]
	and so by symmetry 
	\[\max\{0,\log_a|a|\}=\max\{0,\log_b|b|\}.\]
\end{proof}

Our third Lemma is that non-Archimedean absolute values have the ultrametric property.
It is important to keep in mind our Definition~\ref{def:upperAvsNA}.
The two terms are no longer synonymous, because the trivial valuation is ultrametric without being non-Archimedean.

\begin{lemma} \label{lem:NAisUltra}
  If an absolute value $\gav$ on $\Z$ is non-Archimedean then it is ultrametric.
\end{lemma}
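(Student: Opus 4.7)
The plan is to reduce the ultrametric inequality $|m+n|\leq\max\{|m|,|n|\}$ to an estimate of the form $\gamma^{v}\leq(\alpha v+\beta)(\gamma')^{v}$ and then invoke Lemma~\ref{lem:ostrowineq}. I would do this in two stages: first strengthen the non-Archimedean hypothesis to the global bound $|m|\leq 1$ for every $m\in\Z$, and then run the classical binomial-expansion argument.

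For the first stage, note that $[\lav_{NA}]$ is the join of the opens $|n_{0}|<1$ over $n_{0}\in\Znq$, so I may distribute the desired conclusion over this disjunction and argue fibrewise: fix $n_{0}$ and work under the hypothesis $|n_{0}|<1$. The cases $n_{0}=\pm 1$ are vacuous since $|\pm 1|=1$, so the Euclidean magnitude $a:=|n_{0}|_{\infty}$ satisfies $a\geq 2$, and $|a|=|n_{0}|<1$. Monotonicity of $\log_{a}$ (Theorem~\ref{thm:exponentiation}) gives $\log_{a}|a|\leq 0$, hence $\max\{0,\log_{a}|a|\}=0$; the constancy statement Lemma~\ref{lem:ostrowbound}(ii) then propagates this to $\max\{0,\log_{b}|b|\}=0$ for every integer $b>1$, whence $|b|\leq 1$. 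Combined with $|0|=0$, $|\pm 1|=1$ and $|m|=|-m|$, this extends to $|m|\leq 1$ for every $m\in\Z$.

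For the second stage, fix $m,n\in\Z$ and set $M=\max\{|m|,|n|\}$, which is a well-defined non-negative upper real (since $\max\{x,y\}<q$ unpacks as $x<q\wedge y<q$). Binomial expansion, the triangle inequality, multiplicativity, and the bound $|\binom{k}{i}|\leq 1$ from stage one yield
\[
  |m+n|^{k}=|(m+n)^{k}|\leq\sum_{i=0}^{k}\binom{k}{i}\,|m|^{i}|n|^{k-i}\leq(k+1)\,M^{k}
\]
for every $k\in\N_{+}$, using monotonicity of multiplication and exponentiation on non-negative upper reals. Applying Lemma~\ref{lem:ostrowineq} with $\alpha=\beta=1$, $\gamma=|m+n|$ and $\gamma'=M$ then delivers $|m+n|\leq M$; and the ultrametric sequent $|m|<q\wedge|n|<q\to|m+n|<q$ follows at once, since $\max\{|m|,|n|\}<q$ is equivalent to $|m|<q\wedge|n|<q$ on upper reals.

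The main obstacle I anticipate is one of geometric hygiene rather than mathematical substance: classically one would simply ``pick'' a witness $n_{0}$ and proceed, but geometrically the argument must instead be organised fibrewise over $n_{0}$, converting the disjunction in the hypothesis into a conjunction of sequents. Once that move is accepted, the rest is routine monotone manipulation of non-negative upper reals, and the two auxiliary lemmas slot in without further fuss.
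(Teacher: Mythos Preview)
Your argument is correct and follows essentially the same route as the paper's proof: first use Lemma~\ref{lem:ostrowbound} to upgrade the single witness $|n_0|<1$ to the global bound $|m|\leq 1$, then run the binomial expansion and apply Lemma~\ref{lem:ostrowineq}. One small slip: in your displayed chain the middle term should read $\sum_{i}|m|^{i}|n|^{k-i}$ (after using $|\binom{k}{i}|\leq 1$), since the literal bound $\sum_{i}\binom{k}{i}|m|^{i}|n|^{k-i}\leq (k+1)M^{k}$ fails in general.
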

\begin{proof}
  For some $n$ we have $|n|<1$, so $\max\{0, \log_n |n|\}=0$.
  Then by Lemma~\ref{lem:ostrowbound}, $|m|\leq 1$ for all $m$.
  We can now apply Lemma~\ref{lem:ostrowineq} to show
  $|a+b|\leq \max\{|a|,|b|\}$.
  For any $v\in\mathbb{N}_+$, we have $|{v \choose i}|\leq 1$ and so binomial expansion yields
  \[
    |a+b|^v \leq
      |a|^v + |a|^{v-1}|b| + \cdots + |b|^v\leq (v+1)\cdot \left(\max\{|a|,|b|\}\right)^v \text{.}
  \]
\end{proof}

\section{Ostrowski's Theorem for $\Z$}
\label{sec:MainThm}

Ostrowski's Theorem is typically phrased as a classification result --- it answers the question: \textit{`What are all the non-trivial places of $\mathbb{Q}$?'}. Here, we sharpen this to a representation result for absolute values on $\Z$:

\begin{theorem}[Ostrowski's Theorem for {{$\Z$}}]\label{thm:ostrowskiN} As our setup, denote:
	\begin{itemize}
		\item $[\lav]:=$ The space of absolute values on $\Z$, valued in upper reals.
		\item $\ISpec(\Z):=$  The space of prime ideals of $\Z$.
		\item $\Znq:=$ The set of non-zero integers.
		\item $\overleftarrow{[-\infty,1]}:=$ The space of upper reals bounded above by $1$.
	\end{itemize}
	Define 
	\[\FL:=\{(\frap,\lambda)\in  \ISpec(\Z)\times\overleftarrow{[-\infty,1]} \,\,\big| \,\,\lambda < 0 \leftrightarrow \exists a\in\Znq. (a\in\mathfrak{p})\}.\]	
	Then the following spaces are equivalent:
	\[[\lav]\cong \FL.\]
\end{theorem}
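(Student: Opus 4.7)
The plan is to construct explicit maps $\Phi \colon [\lav] \to \FL$ and $\Psi \colon \FL \to [\lav]$ and verify they are mutually inverse using the Case-Splitting Lemma (Lemma~\ref{lem:casesplitting}) along the Archimedean vs. non-Archimedean decomposition $[\lav] = [\lav_A] \cup [\lav_{NA}]$. This matches on the $\FL$ side with the complementary decomposition into the closed subspace where $\frap = (0)$ (forcing $\lambda \geq 0$) and the open subspace where $\frap$ contains a non-zero element (forcing $\lambda < 0$), the latter identified as $\PSpec \times \overleftarrow{[-\infty,0)}$ via Lemma~\ref{lem:primesemiringideal}.

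For the forward map, set $\Phi(\gav) := (\frap_\gav, \lambda_\gav)$ where $\frap_\gav := \{n \in \Z : |n| < 1\}$ and $\lambda_\gav := \inf_{b \in \N_+,\, b > 1} \log_b |b|$, an infimum of upper reals bounded above by $1$ via Lemma~\ref{lem:ostrowbound}(i). That $\frap_\gav$ is a prime ideal of $\Z$ is checked by case-splitting: on $[\lav_A]$ it is trivially $\{0\}$, while on $[\lav_{NA}]$ it is closed under addition by Lemma~\ref{lem:NAisUltra} (ultrametricity) and under multiplication by $\Z$ because Lemma~\ref{lem:ostrowbound} forces $|n| \leq 1$ throughout. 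The biconditional $\lambda_\gav < 0 \iff \exists a \in \Znq \cap \frap_\gav$ holds because both sides reduce to the open condition ``there exists $b > 1$ with $|b| < 1$''.

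For the backward map, the natural description splits on whether $\frap = (0)$, so the key task is to give a unified geometric formula. For $(\frap, \lambda) \in \FL$ and $n \in \N_+$, define
\[\Psi(\frap, \lambda)(n) := \prod_{\substack{q \text{ prime} \\ q \mid n}} q^{\alpha_q \cdot \ord_q(n)}\text{,}\]
where $\alpha_q$ is the upper real determined by: $\alpha_q < r$ iff $\max(\lambda, 0) < r$ or ($q \in \frap$ and $\lambda < r$), extended by $|0| := 0$ and $|-n| := |n|$. This yields $\alpha_q = \lambda$ when $q \in \frap$ and $\alpha_q = \max(\lambda, 0)$ otherwise, so the formula specialises to $|n| = n^\lambda$ in the Archimedean branch and to $|n| = p^{\lambda \ord_p(n)}$ in the NA branch with $\frap = (p)$. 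Multiplicativity is immediate from the product; the Triangle Inequality is verified by case-splitting, using subadditivity $(m+n)^\lambda \leq m^\lambda + n^\lambda$ for $\lambda \in [0,1]$ in the Archimedean case and $\ord_p(m+n) \geq \min(\ord_p(m), \ord_p(n))$ combined with $\lambda \leq 0$ (flipping the inequality) for ultrametricity in the NA case.

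Finally, apply Lemma~\ref{lem:casesplitting} to verify both $\Psi \circ \Phi = \id_{[\lav]}$ and $\Phi \circ \Psi = \id_\FL$ componentwise. On $[\lav_A]$, Lemma~\ref{lem:ostrowbound}(ii) gives $\log_n |n| = M_\gav = \lambda_\gav$ for all $n > 1$, so $|n| = n^{M_\gav}$ using that $\log_n$ inverts $n^{(-)}$ on upper reals (Theorem~\ref{thm:exponentiation}); this matches $\Psi \Phi (\gav)$. On $[\lav_{NA}]$, Lemma~\ref{lem:primesemiringideal} gives $\frap_\gav = (p)$ for a unique prime $p$, and for $n = p^k m$ with $\gcd(p, m) = 1$, multiplicativity yields $|n| = |p|^k |m| = p^{\lambda_\gav k}$, since $m \notin \frap_\gav$ forces the closed bound $|m| \geq 1$ while Lemma~\ref{lem:ostrowbound}(i) forces the open bound $|m| \leq 1$, yielding $|m| = 1$ as upper reals. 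The principal obstacle is that $\Psi$ cannot be defined by a geometric case split, so the unified formula for $\alpha_q$ must encode the case distinction via a geometric disjunction; a secondary subtlety is the upper-real bookkeeping just described, where the closed lower bound meets the open upper bound to force exact equality.
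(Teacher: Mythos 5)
Your proposal is correct and follows the paper's overall strategy exactly: define the forward map via $\frap_\gav=\{n:|n|<1\}$ and $\lambda_\gav$ as an infimum of upper reals, give a single unified geometric formula for the inverse (since a naive case-split on Archimedean vs.\ non-Archimedean is not available), and then verify all properties and the two composites by the Case-Splitting Lemma along the open--closed decomposition $[\lav]=[\lav_{NA}]\vee[\lav_A]$. The one genuine point of divergence is the shape of the unified formula for the inverse. The paper writes
\[
  |n|_{\frap,\lambda}=\min\bigl\{1,\inf\{(p^{\ord_p(n)})^\lambda \mid p\in\frap\text{ prime}\}\bigr\}\cdot\max\{1,n^\lambda\}\text{,}
\]
exploiting that the empty infimum is $+\infty$ on the Archimedean branch, whereas you write $\prod_{q\mid n}(q^{\ord_q(n)})^{\alpha_q}$ and hide the case distinction inside the single geometric disjunction defining the upper real $\alpha_q$ (equivalently, $\alpha_q<r$ iff $\lambda<r$ and ($0<r$ or $q\in\frap$)). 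Both encodings are valid and geometric, and they agree on each branch of the case split, hence everywhere. Your product form is pleasantly multiplicative-by-construction; the paper's min/inf/max form makes the specialisation in Proposition~\ref{prop:ghatcases} slightly more immediate. The only other minor difference is that you define $\lambda_\gav$ as $\inf_{b>1}\log_b|b|$ rather than over primes only; this is harmless, as the two infima coincide (one can see this by the convex-combination identity $\log_b|b|=\sum_i c_i\log_{p_i}|p_i|$ with $\sum c_i=1$, or simply by checking both branches as the paper does in Proposition~\ref{prop:avfromfhat}).
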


Informally, Theorem~\ref{thm:ostrowskiN} says: any absolute value $\gav$ of $\Z$  can be canonically associated to a pair
\[(\frap,\lambda)\in \ISpec(\Z)\times \overleftarrow{[-\infty,1]}\]
satisfying certain compatibility conditions. 

Before we begin the proof, some preparatory remarks. One, the standard proofs of Ostrowski's Theorem (spelled out in, e.g. \cite{vdW2}) can be adapted to our setting, but they still only give us one direction of the isomorphism. Additional work is therefore needed to construct the second direction, and to show that the two directions are inverse to each other. Two, the decision to work with upper reals (as opposed to Dedekinds) makes the algebra rather delicate, for reasons already alluded to in Observation~\ref{obs:absvalueQ}. Some care is needed in order to maintain geometricity throughout the proof. For the expert reader, let us remark that this (interestingly) results in a picture of $[\lav]$ that is similar but slightly different from the classical picture of the Berkovich spectrum $\M(\Z)$.\footnote{Recall: the Berkovich Spectrum on $\Z$ is defined as the space of multiplicative seminorms on $\Z$, but is equipped with the Gel'fand topology, unlike the non-Hausdorff topology that emerges here due to our use of upper reals.}

\subsection{First Direction: Classification of Absolute Values} \label{subsubsec:1stdirection}
First we define a map
\begin{align*}
	\widehat{f}\colon [\lav]&\longrightarrow \FL\\
	\gav&\longmapsto (\frap_{\gav},\lambda_{\gav})
\end{align*}
by
\begin{itemize}
	\item $\frap_{\gav}:=\{n\in\Z \big| |n|<1\}$
	\item $\lambda_{\gav}:=\inf\{\log_m |m|\big|\,\,  m\in\PSpec \}$.
\end{itemize}
Why is $\lambda_{\gav}$ a geometric construction? Note that each $m$ is a Dedekind real, so by~\cite{NV} we have a map
$\log_m\colon \overleftarrow{[0,\infty)} \rightarrow \overleftarrow{[-\infty,\infty)}$.
Then $\lambda_{\gav}$ is defined as an infimum of upper reals, and so (see Remark~\ref{rem:infsupDed}) itself is also an upper real.

To show $\frap_{\gav}$ is a prime ideal, the only point of difficulty is to show closure under addition and multiplication.
\begin{itemize}
	\item \emph{Closure under Addition.} Suppose $m,n\in \frap_{\gav}$. If either is zero, then immediately $m+n\in\frap_{\gav}$.
	If both are non-zero, then by definition $\gav$ is non-Archimedean and hence (Lemma~\ref{lem:NAisUltra}) ultrametric, so $|m+n| \leq \max\{|m|, |n|\} < 1$.
	\item \emph{Closure under Multiplication.} Suppose $m\in \frap_{\gav}$, and $n\in\Z$. If either is zero, then $m\cdot n \in\frap_{\gav}$. If $m\neq 0$, then $\gav$ is non-Archimedean. Assume that $m>1$. If $n=1$, it is obvious $m\cdot n \in\frap_{\gav}$. If $n>1$, the Fundamental Lemma~\ref{lem:ostrowbound} yields
	$$\log_n|n|\leq \max \{0,\log_m|m|\}=0,$$
	and so $|n|\leq 1$. As such, $|m\cdot n|=|m||n|<1$ and so $m\cdot n \in\frap_{|\cdot|}$. Finally, if instead $m<-1$ and/or $n<-1$, the same argument still holds since $|m|=|-m|$ and $|n|=|-n|$. 
\end{itemize}

Now we know that $\frap_{\gav}$ is prime, by Lemma~\ref{lem:primesemiringideal} it has a non-zero element iff there is a (unique) non-zero prime $p$ for which $|p|<1$, i.e. $\log_p |p|<0$, and this holds iff $\lambda_{\gav}<0$.
Hence $(\frap_{\gav}, \lambda_{\gav})$ is in $\FL$
and we have completed the construction of $\widehat{f}$.

In the Archimedean and non-Archimedean cases, we can recover $\gav$ from $\widehat{f}(\gav)$.

\begin{proposition} \label{prop:avfromfhat}
  Let $\gav$ be an absolute value on $\Z$.
  \begin{enumerate}[label=(\roman*)]
  \item
    $\gav$ is non-Archimedean iff $\lambda_{\gav}<0$.
    In that case, $\frap_{\gav}=(p)$ for some unique prime $p$,
    $\lambda_{\gav}=\log_p|p|$,
    and for all $n\in \N_+$ we have
	\[
	  |n| = \left( p^{\ord_p(n)}\right)^{\lambda_{\gav}}
	  \text{.}
	\]
  \item
    $\gav$ is Archimedean iff $0 \leq \lambda_{\gav}$.
    In that case, $\frap_{\gav}=(0)$, and for all $n\in \N_+$ we have
    \[
      |n| = n^{\lambda_{\gav}}
      \text{.}
    \]
  \end{enumerate}
\end{proposition}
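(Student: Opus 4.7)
The plan is to prove (i) and (ii) separately, with each biconditional being essentially built into the construction of $\widehat{f}$ in Section~\ref{subsubsec:1stdirection}. Indeed, $\widehat{f}(\gav)=(\frap_{\gav},\lambda_{\gav})$ lies in $\FL$ by construction, whose defining constraint $\lambda<0 \leftrightarrow \exists a\in\Znq\,(a\in\frap)$ directly gives $\lambda_{\gav}<0$ iff $\frap_{\gav}$ contains a non-zero element, iff $\gav$ is non-Archimedean. The Archimedean statement is then the closed complement.

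For (i), assume $\gav$ is non-Archimedean. Lemma~\ref{lem:primesemiringideal} gives $\frap_{\gav}=(p)$ for a unique prime $p$, and this yields a canonical map $\gav\mapsto p$ from $[\lav_{NA}]$ to $\PSpec$. The key step is to show $|q|=1$, i.e. $\log_q|q|=0$, for every prime $q\neq p$. Since equality of naturals is decidable, when $q\neq p$ we have $q\notin(p)=\frap_{\gav}$, so the open ``$|q|<1$'' fails, yielding $1\leq|q|$ in the closed-complement sense. Meanwhile, the Fundamental Lemma~\ref{lem:ostrowbound} applied to $a=q,\,b=p$ gives $\log_q|q|\leq\max\{0,\log_p|p|\}=0$ (using $\log_p|p|<0$), so $|q|\leq 1$. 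Combining gives $|q|=1$. The infimum then reduces to $\lambda_{\gav}=\log_p|p|$: it is trivially $\leq\log_p|p|$ (one element of the set) and bounded below by $\log_p|p|$ since $\log_p|p|<0\leq\log_m|m|$ for $m\neq p$. For the final formula, write $n=p^r z$ with $r=\ord_p(n)$ and $\gcd(p,z)=1$; every prime factor of $z$ is distinct from $p$, so the same argument gives $|z|=1$, and Theorem~\ref{thm:exponentiation} then yields $|n|=|p|^r=p^{r\lambda_{\gav}}=(p^r)^{\lambda_{\gav}}$.

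For (ii), assume $\gav$ is Archimedean. Then no non-zero integer lies in $\frap_{\gav}$, so $\frap_{\gav}=(0)$. For each prime $m$, ``$|m|<1$'' fails, giving $1\leq|m|$ and therefore $0\leq\log_m|m|$, so $\max\{0,\log_m|m|\}=\log_m|m|$. By Lemma~\ref{lem:ostrowbound}(ii), this common value $M_{\gav}$ is independent of $m$, hence $\lambda_{\gav}=\inf\{\log_m|m|\}=M_{\gav}$. For $n\in\N_+$ with prime factorisation $n=\prod_i p_i^{a_i}$, multiplicativity and the exponent laws of Theorem~\ref{thm:exponentiation} then give $|n|=\prod_i|p_i|^{a_i}=\prod_i p_i^{a_i M_{\gav}}=n^{M_{\gav}}=n^{\lambda_{\gav}}$.

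The principal obstacle is the careful handling of upper reals, specifically the passage from ``$q\notin\frap_{\gav}$'' to ``$1\leq|q|$'': this uses the failure of the open ``$|q|<1$'' rather than any classical negation, and relies crucially on $\frap_{\gav}$ being defined precisely as the geometric subset $\{n:|n|<1\}$. Once this closed-complement step is secured, the remaining work is routine computation within the arithmetic of upper reals.
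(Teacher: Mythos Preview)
Your proof is correct and follows essentially the same route as the paper. The only minor divergence is in part~(ii): the paper applies the Fundamental Lemma~\ref{lem:ostrowbound} directly to an arbitrary integer $n\geq 2$ to get $\log_n|n|=M_{\gav}$ and hence $|n|=n^{\lambda_{\gav}}$ in one step, whereas you establish this only for primes and then recover the general case via prime factorisation and multiplicativity---both arguments are valid and rest on the same ideas.
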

\begin{proof}
  In each case, the facts about $\frap_{\gav}$ and the sign of $\lambda_{\gav}$ have already been noted.
  
  (i): Suppose $\gav$ is non-Archimedean.
  By multiplicativity,
  \[
    |n| = \prod_{q \text{ prime}} |q|^{\ord_q(n)}
    \text{.}
  \]
  If $q$ is a prime not equal to $p$, then $q\notin \frap_{\gav}$ and so $1 \leq |q|$.
  On the other hand, $\log_q |q| \leq \max\{0, \log_p |p|\} = 0$ by  Lemma~\ref{lem:ostrowbound}, so $|q| \leq 1$.
  We deduce that $|q|=1$, and it follows that
  $\lambda_{\gav}=\log_p |p|$ and
  \[
    |n| = |p|^{\ord_p(n)}
      = \left( p^{\log_p |p|}\right)^{\ord_p(n)}
      = p^{\lambda_{\gav}\cdot \ord_p(n)}
    \text{.}
  \]
  (ii):
  Suppose $\gav$ is Archimedean. Then for all $n\geq 2$ in $\Z_+$ (the case $n=1$ is trivial) we have $\log_n |n| = \max\{0, \log_n |n|\}$, which is the constant $M_{\gav}$ of Lemma~\ref{lem:ostrowbound}. It follows that $\lambda_{\gav}$ is this constant, so
  $|n| = n^{\log_n |n|} = n^{\lambda_{\gav}}$.
\end{proof}



\begin{discussion}[Topological Constraints by Upper Reals]\label{dis:nonArchUPPER REALS}
  Unlike the classical Ostrowski's Theorem, Proposition~\ref{prop:avfromfhat} does not directly prove that any non-Archimedean absolute value is equivalent to $\gav_p$ for some prime $p\in\mathbb{N}_+$.  How come? Suppose, given some non-Archimedean $\gav$,  we want a real $\lambda_{\gav}$ such that $\gav=\gav^{\lambda_{\gav}}_p$. Since $|p|_p=p^{-1} = \frac{1}{p}$, the relevant exponent would be $\lambda_{\gav}=\log_{\frac{1}{p}}|p|$, which we know to be a positive \textit{lower real} by \cite[Remark 4.4]{NV}. Geometrically, it is more natural to use the signed upper real $\lambda$ to give a uniform treatment. 
\end{discussion}

\begin{discussion}[Restricting to $\N$]\label{dis:FormalSubissues} 
  Let us return to Remark~\ref{rem:toposabsvNATNUM}.
  The suggestion there is that we might be able to deal with absolute values entirely through their restriction to positive integers, so long as we use subtractive versions of the definition of ideals (Remark~\ref{rem:NATNUMprimeideals}) and the triangle inequality (Proposition~\ref{prop:avN}).
  
  By Proposition~\ref{prop:avN} it is indeed enough to define an absolute value by its restriction to $\N$.
  It can immediately be extended to $\Z$, after which we have our proof of Ostrowski.
  However, there still remains the question whether that proof itself can be framed in terms of the absolute values on $\N$.
  
  There is a definite obstruction to making this work, and we do not know whether it can be circumvented.
  In defining $\widehat{f}$, we need to show that $\frap_{\gav}$ is a prime ideal (in the sense of $\N$), and for that we use Lemma~\ref{lem:NAisUltra} to show that if $\frap_{\gav}$ is non-trivial then $\gav$ is ultrametric.
  But to work with $\N$, the ultrametric property needs to have a subtractive version,
  \[
    |m| \leq \max\{|m+n|, |n|\} \text{.}
  \]
  Together with the usual version, they say that the two largest of $|m|$, $|n|$ and $|m+n|$ are equal, which is clear once Theorem~\ref{thm:ostrowskiN} has been proved. (Consider the two smallest of $\ord_p(m)$, $\ord_p(n)$, and $\ord_p(m+n)$.) Before then, however, the proof of Lemma~\ref{lem:NAisUltra} does not readily adapt to the subtractive version.
\end{discussion}

\subsection{Second Direction: $(\frap,\lambda)$ determines an Absolute Value} \label{subsubsec:2nddirection}

We now work to define a map
\begin{align*}
\widehat{g}\colon \FL &\longrightarrow [\lav]\\
(\frap,\lambda)&\longmapsto \gav_{\frap,\lambda}
\end{align*}
inverse to $\widehat{f}$.

In fact, Proposition~\ref{prop:avfromfhat} already tells us how to do this in the complementary Archimedean and non-Archimedean cases. However, the fact that we require this construction to be geometric creates subtleties. In particular, it is not decidable if $\frap$ is non-trivial or $(0)$, nor is it decidable if $\lambda\leq \lambda'$ for any $\lambda,\lambda'\in\overleftarrow{[-\infty,1]}$. Hence, given $(\frap, \lambda)\in (\Frap,\Lambda)$, our desired map $\widehat{g}$ cannot be directly defined using the following case-splittings:
\begin{itemize}
	\item \textbf{Case 1:} $\lambda < 0$, \textbf{Case 2:} $0\leq \lambda$; or
	\item \textbf{Case 1:} $\exists a\in\Znq$ such that $ a\in\frap$, \textbf{Case 2:} $\frap=(0)$.
\end{itemize}
In other words, the natural faultlines along which one might split $(\frap,\lambda)\in\FL$ into the Archimedean vs. non-Archimedean case does not work in a naive way (i.e. by using decidability to make the split as a coproduct before defining the map separately on the two components). 

As such we must first find a geometric construction that covers both cases in a uniform way.

\begin{construction}\label{cons:GETABSVALUE}
	Suppose $(\frap,\lambda)\in\FL$.
    We define $\gav_{\frap,\lambda}\colon\Z \rightarrow \overleftarrow{[0,\infty)}$ as
	\[
	  |n|_{\frap,\lambda}=
	  \begin{cases}
	    0 & \text{if $n=0$} \\
	    \min\left\{
	      1,
	      \inf \{ \left(p^{\ord_p(n)} \right)^\lambda \mid p \text{ prime in } \frap\}
	    \right\}
	    \boldsymbol{\cdot}
	    \max \left\{ 1, n^\lambda \right\}
	      & \text{if $n>0$} \\
	    |-n|_{\frap,\lambda} & \text{if $n<0$}
      \end{cases}
    \]
\end{construction}

\begin{discussion}\label{dis:ABSVALUEgeometric} Although Construction~\ref{cons:GETABSVALUE} consists of many different components, each component is geometric, and so the final construction is also geometric. To see this, note:
	\begin{itemize}
		\item The initial case split into $n=0$ vs. $n>0$ vs. $n<0$ is permitted since $<$ is decidable on $\Z$.
		\item The $p$-adic ordinal $\ord_p(n)$ is also geometric. This essentially follows from the Euclidean Algorithm, which gives a constructive account of unique prime factorisation for any $n\in\Znq$.
		\item The construction
		\[\inf\{(p^{\ord_p(n)})^\lambda \,\big|\, p \, \text{prime in} \, \frap\}\]
		is a geometric workaround the fact it is undecidable if $\frap=(0)$. Its geometricity comes from
		the fact that upper reals possess arbitrary $\inf$s (Remark~\ref{rem:infsupDed}).
        We shall see how this works in Proposition~\ref{prop:ghatcases}.
		\item Finally, recall from Section~\ref{sec:UpperReals} that $\min$, $\max$ and multiplication are all well-defined operations on the non-negative upper reals. 
	\end{itemize}
\end{discussion}

We can now see how, in the two particular cases corresponding to the non-Archimedean and Archimedean cases in Proposition~\ref{prop:avfromfhat},
the definition splits in the expected way.

\begin{proposition} \label{prop:ghatcases}
  Let $(\frap,\lambda)$ be in $\FL$.
  \begin{enumerate}[label=(\roman*)]
  \item
    If $\lambda<0$ then there is a unique prime $p$ in $\frap$ and, for $n\ne 0$,
    \[
      |n|_{\frap,\lambda} = \left( p^{\ord_p(n)}\right)^\lambda
      \text{.}
    \]
  \item
    If $0\leq \lambda$ then, for $n>0$,
    \[
      |n|_{\frap,\lambda} = n^\lambda
      \text{.}
    \]
  \end{enumerate}
\end{proposition}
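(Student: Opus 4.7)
The plan is to show that, under each hypothesis, the uniform formula in Construction~\ref{cons:GETABSVALUE} collapses to the simpler expression claimed. Both cases rely on the defining biconditional of $\FL$ controlling which of the $\min$, $\max$, or $\inf$ factors is trivialised, combined with monotonicity of upper-real exponentiation (Theorem~\ref{thm:exponentiation}). Since equality on $\Z$ is decidable I may treat $n = 0$, $n > 0$, and $n < 0$ separately; the case $n < 0$ reduces to $n > 0$ directly by the definition, using $\ord_p(-n) = \ord_p(n)$, so it suffices to verify both claims for $n > 0$.

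For (i), I would first use $\lambda < 0$ together with the defining biconditional of $\FL$ to produce a non-zero element of $\frap$, and then invoke Lemma~\ref{lem:primesemiringideal} to conclude $\frap = (p)$ for a unique prime $p$. Then ``$q$ prime in $\frap$'' is equivalent to ``$q = p$'', so the infimum in Construction~\ref{cons:GETABSVALUE} collapses to the single term $(p^{\ord_p(n)})^\lambda$. Since $p^{\ord_p(n)} \geq 1$ and $\lambda \leq 0$, the weak monotonicity of $x^{\argu}$ from Theorem~\ref{thm:exponentiation} yields $(p^{\ord_p(n)})^\lambda \leq 1$, so the outer $\min$ returns that same value. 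The analogous inequality $n^\lambda \leq 1$ forces $\max\{1, n^\lambda\} = 1$, and the product collapses to $(p^{\ord_p(n)})^\lambda$ as required.

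For (ii), the key step — which I expect to be the main obstacle — is to argue that the hypothesis $0 \leq \lambda$ forces the index set of the infimum to be empty. The subspace $0 \leq \lambda$ is the closed complement of the open $\lambda < 0$, so on it the proposition $\lambda < 0$ is identified with $\bot$; by the biconditional defining $\FL$, the proposition $\exists a \in \Znq.\, a \in \frap$ is then also forced to $\bot$, so that no prime lies in $\frap$. The nullary infimum of upper reals is $\infty$ (Remark~\ref{rem:infsupDed}), and $\min\{1,\infty\} = 1$. For the remaining factor, $0 \leq \lambda$ and $n \geq 1$ give $1 = n^0 \leq n^\lambda$ by monotonicity, so $\max\{1, n^\lambda\} = n^\lambda$, and the product is $n^\lambda$. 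The delicate point is resisting the temptation to run this as a classical case split on ``either there is a prime in $\frap$ or there isn't'': instead one must track how the biconditional built into the definition of $\FL$ interacts point-freely with passage to the closed complement, so that the empty indexing of the infimum is a genuine geometric consequence rather than a decision.
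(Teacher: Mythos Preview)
Your proposal is correct and follows essentially the same route as the paper: in (i) you use the $\FL$ condition plus Lemma~\ref{lem:primesemiringideal} to reduce the infimum to a single term and then monotonicity of exponentiation to collapse the $\min$ and $\max$, and in (ii) you argue that the indexing set of the infimum is empty so it evaluates to $\infty$, after which $1\leq n^\lambda$ finishes. Your treatment is in fact slightly more careful than the paper's in two places --- you correctly write $\leq 1$ rather than $<1$ in (i) (since $\ord_p(n)$ may vanish), and you spell out the point-free reason why $0\leq\lambda$ forces the index set to be empty --- but the underlying argument is the same.
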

\begin{proof}
  (i):
  By the $\FL$ condition and Lemma~\ref{lem:primesemiringideal},
  there is a unique prime $p$ in $\frap$ and,
  for $n>0$, we have both $\left( p^{\ord_p(n)}\right)^\lambda$ and $n^\lambda$ less than 1. It follows that $|n|_{\frap,\lambda}$ evaluates as stated. For $n<0$ the same formula works.

  (ii):
  If $0 \leq \lambda$, then $\frap$ must be $(0)$.
  Then the $\inf$ in the expression is empty, which evaluates as $\infty$.
  We have $1 \leq n^\lambda$, so the entire expression evaluates as stated.
\end{proof}

With this observation we can use the Case-Splitting Lemma~\ref{lem:casesplitting} to show that $\widehat{g}$ maps to $[\lav]$.

\begin{proposition} \label{prop:ghattoav}
  If $(\frap,\lambda)$ is in $\FL$, then $\gav_{\frap,\lambda}$ is an absolute value, and moreover
  $\widehat{f}(\gav_{\frap,\lambda}) = (\frap,\lambda)$.
\end{proposition}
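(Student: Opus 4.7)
The plan is to reduce both claims to Proposition~\ref{prop:ghatcases} via the Case-Splitting Lemma~\ref{lem:casesplitting}, using the decomposition of $\FL$ into the open subspace $U := \{(\frap,\lambda) : \lambda<0\}$ and its closed complement $U^\com := \{(\frap,\lambda) : 0\le\lambda\}$. The equality $|1|_{\frap,\lambda} = 1$ is immediate from Construction~\ref{cons:GETABSVALUE}, since $\ord_p(1) = 0$ for every prime makes both the inner $\inf$-factor and the outer $\max$-factor evaluate to $1$.

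To show $\gav_{\frap,\lambda}$ is an absolute value, it suffices to verify multiplicativity and the triangle inequality. Each is a geometric sequent and thus carves out a subspace of the ambient function space into which $\widehat{g}$ lands; by the Case-Splitting Lemma it is enough to check that this subspace contains $U$ and $U^\com$ separately. On $U$, Proposition~\ref{prop:ghatcases}(i) gives $|n|_{\frap,\lambda} = (p^{\ord_p(n)})^\lambda$ for the unique prime $p \in \frap$: multiplicativity reduces to additivity of $\ord_p$ plus the exponential laws of Theorem~\ref{thm:exponentiation}, and the (stronger, ultrametric) triangle inequality follows from $\ord_p(m+n) \ge \min\{\ord_p(m),\ord_p(n)\}$ together with the fact that raising to the negative power $\lambda$ reverses order. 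On $U^\com$, Proposition~\ref{prop:ghatcases}(ii) gives $|n|_{\frap,\lambda} = n^\lambda$ for $n > 0$: multiplicativity is Theorem~\ref{thm:exponentiation}, and the triangle inequality is the classical subadditivity $(m+n)^\lambda \le m^\lambda + n^\lambda$ valid for $0 \le \lambda \le 1$. The negative-argument cases reduce to the positive ones via the built-in identity $|-n|_{\frap,\lambda} = |n|_{\frap,\lambda}$.

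For the identity $\widehat{f}(\gav_{\frap,\lambda}) = (\frap,\lambda)$, I would again case-split the equalizer subspace of $\FL$. On $U$, $\frap = (p)$ for a unique prime $p$ (Lemma~\ref{lem:primesemiringideal}); the formula yields $|q|_{\frap,\lambda} = 1$ for every prime $q \ne p$ and $|p|_{\frap,\lambda} = p^\lambda < 1$, so $\frap_{\gav_{\frap,\lambda}} = (p) = \frap$ and the defining infimum evaluates to $\min\{0,\lambda\} = \lambda$. On $U^\com$, $\frap = (0)$; every $n \ne 0$ has $|n|_{\frap,\lambda} = n^\lambda \ge 1$, so $\frap_{\gav_{\frap,\lambda}} = (0) = \frap$, while $\log_m m^\lambda = \lambda$ for every prime $m$ gives $\lambda_{\gav_{\frap,\lambda}} = \lambda$.

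The main obstacle is the Archimedean triangle inequality: the classical argument for $(m+n)^\lambda \le m^\lambda + n^\lambda$ with $0 \le \lambda \le 1$ rests on concavity of $x\mapsto x^\lambda$, which must be recast so as to respect upper-real exponentiation and to yield a genuinely geometric sequent. A clean route is the observation that on $[0,1]$ one has $t^\lambda \ge t$, whence $(a/(a+b))^\lambda + (b/(a+b))^\lambda \ge 1$, a monotone statement that promotes to the upper-real setting via Theorem~\ref{thm:exponentiation}. A secondary but routine point is checking that each property appealed to (the absolute value axioms and the equalizer for $\widehat{f}\circ\widehat{g} = \id$) corresponds to a subspace, so that the Case-Splitting Lemma genuinely applies.
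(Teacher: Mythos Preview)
Your proposal is correct and matches the paper's approach closely: the same open/closed split of $\FL$ at $\lambda<0$, the same appeal to Proposition~\ref{prop:ghatcases} in each case, and the same equalizer argument for $\widehat{f}\circ\widehat{g}=\id$. You have also correctly identified the Archimedean triangle inequality as the one genuine obstacle and located the right idea ($t\le t^\lambda$ for $t\in[0,1]$).

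One refinement the paper supplies that you should incorporate: the step $(a/(a+b))^\lambda + (b/(a+b))^\lambda \ge 1$ does \emph{not} promote to upper $\lambda$ via Theorem~\ref{thm:exponentiation}, because that theorem only defines $x^\lambda$ for bases $x\ge 1$; for $t<1$ the map $\lambda\mapsto t^\lambda$ is antitone, so $t^\lambda$ would land in the lower reals, and the subsequent multiplication of an upper real by a lower real is ill-defined. The paper's fix is to run your convexity calculation for \emph{rational} $q\in(0,1]$ first, obtaining $(m+n)^q\le m^q+n^q$, and then pass to upper $\lambda$ via $(m+n)^\lambda=\inf_{\lambda<q}(m+n)^q\le\inf_{\lambda<q}(m^q+n^q)=m^\lambda+n^\lambda$, where now all bases are integers $\ge 1$ and Theorem~\ref{thm:exponentiation} applies.
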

\begin{proof}
  We know that $\gav_{\frap,\lambda}$ is a map from $\Z$ to $\overleftarrow{[-\infty,\infty)}$, and $[\lav]$ is a subspace of the space of such maps. We are trying to show that the inverse image of that subspace is the whole of $\FL$, and for that it suffices to show that it contains both the open subspace where $\lambda<0$ and its closed complement, where $0\leq \lambda$. This is what Lemma~\ref{lem:casesplitting} tells us.
  
  In the light of Proposition~\ref{prop:ghatcases}, the multiplicative property is clear for both cases. It is the triangle inequality that we need to address.
  
  \textbf{Case 1,} $\lambda<0$:
  In fact we expect, and shall prove, the ultrametric inequality.
  Suppose $m,n\neq 0$ (it is trivial if either is zero), with $m=z_1p^{r_1}$ and $n=z_2p^{r_2}$, where $z_1,z_2\in\Znq$, $r_1,r_2\in\N$ and $\gcd(z_1,p)=1=\gcd(z_2,p)$.
  We might as well suppose $r_1\leq r_2$, so
  \[
    m+n= (p^{r_1})\cdot (z_1+z_2p^{r_2-r_1})\text{,}
  \]
  and $\ord_p(m+n) \geq r_1$.
  It follows (using $\lambda < 0$) that
  \[
    |m+n|_{\frap,\lambda} \leq p^{r_1 \lambda}
      \leq \max(|m|_{\frap,\lambda}, |n|_{\frap,\lambda})
    \text{.}
  \]
  
  \textbf{Case 2,} $0 \leq\lambda$:
  By Proposition~\ref{prop:ghatcases}, we need to show that
  \begin{equation}\label{eq:triangleLAMBDA}
	(m+n)^\lambda\leq m^\lambda + n^\lambda.
  \end{equation}

  Suppose we have positive rationals $0<q,t\leq 1$. Since $t^\argu$ is antitonic with respect to rational exponents, this implies that $t \leq t^q$, and so
  \[
    m^qt\leq m^qt^q, \qquad \text{for any positive integer $m>0$.}
  \]
  This in turn implies
  \begin{equation}\label{eq:triQ}
    \begin{split}
	  (m+n)^q &= (m+n)^q \left(\left(\frac{m}{m+n}\right) + \left(\frac{n}{m+n}\right)\right) \\
	   & \leq (m+n)^q \left(\frac{m}{m+n}\right)^q +(m+n)^q \left(\frac{n}{m+n}\right)^q = m^q+n^q
	   \text{.}
    \end{split}
  \end{equation}
  The calculation here is intrinsically two-sided, relying on the assumption that $q$ is rational: for an upper real $\lambda$ instead of $q$, we should be trying to multiply, for example, an upper real $(m+n)^\lambda$ by a lower real
  $\left(\frac{m}{m+n}\right)^\lambda$,
  and that is not well-defined (cf. the remark on monotonicity in Section~\ref{sec:UpperReals}).

  However, the \emph{conclusion} $(m+n)^q \leq m^q + n^q$ still makes sense for upper reals, with $\leq$ now the (opposite of the) specialisation order.
  Hence for an upper real $\lambda$ we have
  \[
    (m+n)^\lambda = \inf_{\lambda<q} (m+n)^q
      \leq \inf_{\lambda<q} (m^q+n^q)
      = \inf_{\lambda<q} m^q + \inf_{\lambda<q} n^q
      = m^\lambda + n^\lambda
    \text{.}
  \]


To summarise: $|\cdot|_{p,\lambda}$ is an absolute value in both Cases 1 and 2. By Lemma~\ref{lem:casesplitting}, this defines a map $\hat{g}\colon \FL\to[\lav]$. For the rest of the result we look at the equaliser of $\widehat{f}\circ\widehat{g}$ and the identity,
  a subspace of $\FL$. Again, to show it is the whole of $\FL$ it suffices to show the equation is satisfied in both the two cases.
  
  \textbf{Case 1,} $\lambda<0$:
  For $\frap$, we have
  \[
    \left( p^{\ord_p(n)}\right)^\lambda < 1
      \Leftrightarrow p^{\ord_p(n)} > 1
      \Leftrightarrow \ord_p(n) >0
      \Leftrightarrow p \text{ divides } n
      \Leftrightarrow n\in\frap
      \text{.}
  \]

  For $\lambda$, consider that for a prime $q$ we have
  \[
    \log_q \left( \left( p^{\ord_p(q)} \right)^\lambda \right) =
      \begin{cases}
	    \log_p \left(p^\lambda\right) = \lambda
	    & \text{if $q=p$,} \\
	    \log_q 1 = 0 & \text{if $q\ne p$.}
      \end{cases}
  \]
  Hence the $\inf$ of these is $\lambda$.
  
  \textbf{Case 2,} $0 \leq\lambda$:
  For $\frap$, we have $n^\lambda < 1$ iff $n=0$.
  
  For $\lambda$,
  \[
    \log_q\left( q^\lambda \right) = \lambda
    \text{,}
  \]
  and the $\inf$ of these is $\lambda$.
\end{proof}

It remains to show that $\widehat{g}\circ \widehat{f}$ is the identity.
Again it suffices to verify this on the non-Archimedeans and the Archimedeans, and that was done in Propositions~\ref{prop:avfromfhat} and~\ref{prop:ghatcases}.

\textbf{This concludes the proof of Theorem~\ref{thm:ostrowskiN}.}\newline 

Notice: the open-closed gluing of $[\lav_{NA}]$ to $[\lav_A]$ in $[\lav]$ transfers to $\FL$.

\begin{corollary} \label{cor:mainGluing}
\[
  \begin{split}
    [\lav_{NA}] &\cong \PSpec \times \overleftarrow{[-\infty, 0)} \\
    [\lav_A] &\cong \overleftarrow{[0, 1]}
  \end{split}
\]
\end{corollary}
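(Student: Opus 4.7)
The plan is to transport the open--closed decomposition of $[\lav]$ into $[\lav_{NA}]$ and $[\lav_A]$ across the isomorphism $[\lav]\cong\FL$ of Theorem~\ref{thm:ostrowskiN}, and then to identify each half using the biconditional in the definition of $\FL$ together with Lemma~\ref{lem:primesemiringideal}.

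First I would invoke Proposition~\ref{prop:avfromfhat} to see that $[\lav_{NA}]$ corresponds under the isomorphism to the open subspace of $\FL$ cut out by $\lambda<0$, while $[\lav_A]$ corresponds to its closed complement $0\leq\lambda$. These cuts on the factor $\overleftarrow{[-\infty,1]}$ are respectively the open subspace $\overleftarrow{[-\infty,0)}$ and its closed complement $\overleftarrow{[0,1]}$, both already available geometrically as explained in Section~\ref{sec:UpperReals}.

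Next, on each half I would use the biconditional $\lambda<0 \leftrightarrow (\exists a\in\Znq)(a\in\frap)$ built into the definition of $\FL$ to eliminate the ideal coordinate once the sign of $\lambda$ has been fixed. On $\lambda<0$ it forces $\frap$ to contain a non-zero element, i.e.\ $\frap\in\ISpec^+(\Z)$, and any such pair automatically lies in $\FL$; so this half is precisely $\ISpec^+(\Z)\times\overleftarrow{[-\infty,0)}$, and Lemma~\ref{lem:primesemiringideal} supplies $\ISpec^+(\Z)\cong\PSpec$, giving the first isomorphism. On $0\leq\lambda$ the biconditional instead forces $\frap=(0)$, the closed complement of $\ISpec^+(\Z)$ in $\ISpec(\Z)$, which is a singleton (terminal) subspace, so this half collapses onto the remaining factor $\overleftarrow{[0,1]}$.

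I do not expect any real obstacle beyond book-keeping, since the heavy lifting was done in Theorem~\ref{thm:ostrowskiN} and in Lemma~\ref{lem:primesemiringideal}. The only point that deserves attention is checking that $\widehat{f}$ and $\widehat{g}$ really do restrict to the claimed isomorphisms on these two halves, but this is immediate from the explicit formulas given in their constructions, which in each of the two cases already produce absolute values of exactly the stated parametric form.
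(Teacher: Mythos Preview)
Your proposal is correct and follows essentially the same approach as the paper: transport the open--closed split across Theorem~\ref{thm:ostrowskiN}, then identify each piece using Propositions~\ref{prop:avfromfhat} and~\ref{prop:ghatcases}. The only cosmetic difference is that you unpack the content of Proposition~\ref{prop:ghatcases} by citing Lemma~\ref{lem:primesemiringideal} and the $\FL$ biconditional directly, whereas the paper just names the two propositions and leaves the unpacking implicit.
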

\begin{proof}
  The main Theorem splits up using Propositions~\ref{prop:avfromfhat} and~\ref{prop:ghatcases}.
\end{proof}

	\begin{discussion}[Undecidability and Geometricity]\label{dis:casesplittingGEOMETRICPROOF} Given our previous remarks about decidability issues, our proof of Theorem~\ref{thm:ostrowskiN} highlights an interesting subtlety. Namely, why is undecidability a barrier to geometricity when defining constructions, but not when proving properties? Examining the hypotheses of the Case-Splitting Lemma~\ref{lem:casesplitting}, a key tool in our proof, reveals an interesting fine print. The Lemma only justifies analysing a construction via case-splitting \textit{once} the construction already exists (i.e. the map $X\xrightarrow{f}Z$); it does not justify defining \textit{a new construction} via case-splitting, as one might have hoped.  
\end{discussion}

\section{Absolute Values on $\Q$}
\label{sec:Qav}


We now turn to absolute values on $\Q$, with a view to recovering Ostrowski's theorem in its usual form.
Straight away, Proposition~\ref{prop:fieldav} tells us that $[\Alav{\Q}] \cong [\ADavpos{\Q}]$,
so, as far as $\Q$ is concerned, no extra generality is obtained by using the upper reals.

However, we can still exploit Theorem~\ref{thm:ostrowskiN} in a non-trivial way by observing that an absolute value on $\Q$ can be derived from its restriction to $\Z$ -- which, again, is Dedekind and positive definite.

\begin{proposition} \label{prop:QZDpos}
  The restriction map $[\ADavpos{\Q}] \to [\ADavpos{\Z}]$ is an isomorphism.
\end{proposition}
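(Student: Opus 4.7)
The plan is to construct an explicit inverse to the restriction map. Given a positive definite Dedekind absolute value $\gav$ on $\Z$, I would extend it to $\Q$ by the expected formula
\[
  |m/n|_\Q := |m|_\Z / |n|_\Z
\]
for any representative $m/n$ with $n \neq 0$. Decidability of equality on $\Z$ makes the case split ``$n \neq 0$'' geometric, and positive definiteness of $\gav$ forces $|n|_\Z > 0$, so the division of the two non-negative Dedekinds is a legitimate geometric operation.

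The verifications I would then carry out are routine: (a) well-definedness on equivalence classes of fractions --- if $mn' = m'n$ then multiplicativity on $\Z$ gives $|m||n'| = |m'||n|$, and strict positivity of the denominators allows cancellation; (b) the absolute value axioms on $\Q$, each of which reduces after clearing a common denominator to the corresponding axiom on $\Z$ (for instance, the triangle inequality for $m_1/n_1 + m_2/n_2$ follows from $|m_1 n_2 + m_2 n_1|\leq|m_1 n_2|+|m_2 n_1|$ divided by $|n_1 n_2|$); and (c) positive definiteness on $\Q$ (a non-zero $q = m/n$ forces $m \neq 0$, hence $|m| > 0$, hence $|q| > 0$). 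For the mutual inverse relations, extension composed with restriction is immediate by taking $n = 1$, while restriction composed with extension recovers the original because, on any $\gav \in [\ADavpos{\Q}]$, multiplicativity gives $|m/n|\cdot|n| = |m|$, and $|n| > 0$ then forces $|m/n| = |m|/|n|$.

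The step I expect to be the main obstacle --- and the reason the proposition is worth stating rather than being automatic --- is maintaining geometricity at the division. For upper reals, $|n| > 0$ is not expressible as a geometric formula (cf.\ Observation~\ref{obs:seminormsUPPER}), and non-negative upper reals cannot be divided; it is the Dedekind, positive definite setting, guaranteed by Proposition~\ref{prop:fieldav}, that rescues the construction. Once that point is accepted, every subsequent check is elementary algebra on the axioms.
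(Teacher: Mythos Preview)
Your proposal is correct and follows exactly the same route as the paper's proof: construct the inverse by $|m/n| = |m|/|n|$, using positive definiteness to guarantee the denominator is a strictly positive Dedekind so that division is geometrically available, and then verify well-definedness, the absolute value axioms, positive definiteness, and the two inverse identities. The paper's version simply compresses all of your verification steps into the phrase ``easily checked,'' but the content is identical.
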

\begin{proof}
  For the inverse, suppose $\gav$ is a positive definite, Dedekind absolute value on $\Z$.
  If $n\ne 0$ then $|n|$ is positive, and hence invertible. Thus, to extend $\gav$ to $\Q$ we must define $|m/n| = |m|/|n|$. This is easily checked to be well defined and a positive definite, Dedekind absolute value on $\Q$, restricting to the original $\gav$ on $\Z$.  
\end{proof}

We then have the forgetful map to $[\Alav{\Z}]$, and our aim is to exploit its isomorphism with $\FL$.
Clearly, we shall be interested in allowing the $\lambda$ components in $\FL$ to be Dedekind.

\begin{definition}[cf. Theorem~\ref{thm:ostrowskiN}] \label{def:FLDed}
   
   	Define 
   \[
     \FLDed :=\{ (\frap,\lambda) \in
       \ISpec(\Z)\times (-\infty,1] \mid
       \lambda < 0 \leftrightarrow \exists a\in\Znq. (a\in\mathfrak{p}) \}
       \text{.}
   \]
\end{definition}

Thus we have
\begin{equation} \label{eq:PLdia}
  \begin{tikzcd}
    {[\Alav{\Q}]}
      \ar[r, "\cong"]
    & {[\ADavpos{\Q}]}
      \ar[r, "\cong"]
    & {[\ADavpos{\Z}]}
      \ar[r]
    & {[\Alav{\Z}]}
      \ar[d, "\cong"]
    \\
    && \FLDed
      \ar[r]
    & \FL
  \end{tikzcd}
\end{equation}

An obvious conjecture is that the isomorphism of Theorem~\ref{thm:ostrowskiN} lifts to one between $[\ADavpos{\Z}]$ and $\FLDed$.
Encouragingly (Propositions~\ref{prop:QnonArch} and~\ref{prop:QArch}), the isomorphisms of Corollary~\ref{cor:mainGluing} \emph{do} lift,
and in Theorem~\ref{thm:ostrowskiQ} we use this to obtain the standard Ostrowski result for non-trivial Dedekind absolute values.

Unfortunately (Observation~\ref{obs:nolambdaDed}), the isomorphisms do not transfer across the boundary at the trivial absolute value.
The basic problem is that an infimum of Dedekind reals has an upper part, but is not necessarily Dedekind.
Unexpectedly, we find that $[\ADavpos{\Z}]$ and $\FLDed$ are two distinct, non-isomorphic open-closed gluings of $\PSpec\times (-\infty, 0)$ to $[0,1]$.
We do not fully understand what is going on here, nor what it has to say about number theory.

We are already writing $[\lav]$ for $[\Alav{\Z}]$.
Let us also write $[\Dav]$ for the isomorphic spaces
$[\Alav{\Q}]$, $[\ADavpos{\Q}]$, and
$[\ADavpos{\Z}]$. Recalling Definition~\ref{def:upperAvsNA}, $[\Dav]$ inherits three subspaces from $[\lav]$:
the open subspace of non-Archimedeans, its closed complement of Archimedeans, and the space of ultrametrics.
We shall write them as $[\Dav_{NA}]$, $[\Dav_{A}]$, and $[\Dav_{U}]$.
Keep in mind that our definition of Archimedean is non-standard in that it includes the trivial absolute value.

\begin{definition} \label{def:Archnt}
  An absolute value $\gav$ over $\Q$ is
  \emph{non-trivial Archimedean} if there exists some $n\in\Znq$ such that $|n|>1$. They form an open subspace $[\Dav_{A^+}]$ of $[\Dav]$.
\end{definition}

\begin{proposition} \label{prop:Qavopens}
  ~
  \begin{enumerate}[label=(\roman*)]
  \item
    $[\Dav_{NA}]$ and $[\Dav_{A^+}]$ are disjoint.
  \item
    The closed complement of $[\Dav_{NA}]\vee[\Dav_{A^+}]$
    is $\{\gav_0\}$.
    
    (In the light of this, we shall call $\gav$ in $[\Dav]$ \emph{non-trivial} if it is in
    $[\Dav_{NA}]\vee[\Dav_{A^+}]$.)
  \item
    The closed complement of $[\Dav_{A^+}]$ is $[\Dav_{U}]$.
  \end{enumerate}
\end{proposition}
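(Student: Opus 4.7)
The plan is to work directly with the defining sequents of each open or closed subspace, leveraging the Fundamental Lemma~\ref{lem:ostrowbound} and the fact (Propositions~\ref{prop:fieldav} and~\ref{prop:QZDpos}) that a point of $[\Dav]$ restricts to a Dedekind, positive-definite absolute value on $\Z$. For (i), I would show the meet $[\Dav_{NA}]\wedge[\Dav_{A^+}]$ is empty by ruling out each basic open of the form $|n|<1\wedge |m|>1$ with $n,m\in\Znq$. Passing to $|n|,|m|\geq 2$ (since $|\pm 1|=1$, and $|{-k}|=|k|$), Lemma~\ref{lem:ostrowbound}(ii) equates $\max\{0,\log_n|n|\}$ with $\max\{0,\log_m|m|\}$; but the left-hand side is $0$ (from $|n|<1$) while the right-hand side is strictly positive (from $|m|>1$), a contradiction.

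For (ii), the closed complement of $[\Dav_{NA}]\vee[\Dav_{A^+}]$ is cut out by the sequents $|n|<1\to\bot$ and $1<|n|\to\bot$ indexed over $n\in\Znq$. Since $|n|$ is Dedekind, these two sequents together force $|n|=1$, while $|0|=0$ holds by definition of an absolute value. Hence the subspace is described by the complete theory ``$|0|=0$ and $|n|=1$ for all $n\in\Znq$''. This theory has a unique model in any topos, namely the trivial absolute value $\gav_0$, so the classifying topos is terminal and the subspace is the one-point space $\{\gav_0\}$.

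For (iii), I would establish the two subspace inclusions separately via their defining sequents. The forward inclusion $[\Dav_U]\subseteq$ (closed complement of $[\Dav_{A^+}]$) is routine: if $\gav$ is ultrametric then induction on $n\in\N$ gives $|n|\leq 1$ (base $|1|=1$, step $|n{+}1|\leq\max\{|n|,1\}$), so the sequent $1<|n|\to\bot$ is derivable. The reverse inclusion recycles the calculation inside the proof of Lemma~\ref{lem:NAisUltra}: from $|k|\leq 1$ for all $k\in\Z$ we obtain $|\binom{v}{i}|\leq 1$, so binomial expansion yields $|a+b|^v\leq(v+1)(\max\{|a|,|b|\})^v$ for every $v\in\N_+$, and Lemma~\ref{lem:ostrowineq} then gives $|a+b|\leq\max\{|a|,|b|\}$. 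The main conceptual care is geometric: subspace meets, joins, and closed complements must be treated at the level of sequents, not by asking whether a given point is ``either NA or $A^+$''. Fortunately each step above reduces to ``adjoining the stated sequents to the theory of absolute values derives the target sequent'', which is legitimately geometric and avoids any appeal to excluded middle on the defining inequalities.
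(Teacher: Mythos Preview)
Your proposal is correct. Parts (i) and (ii) match the paper's proof essentially line for line, with only cosmetic differences (you invoke Lemma~\ref{lem:ostrowbound}(ii) rather than~(i), and spell out the one-point subspace more carefully). For part (iii) there is a small but genuine divergence in one direction. To show that the closed complement of $[\Dav_{A^+}]$ lies in $[\Dav_U]$, the paper first uses part~(ii) to identify that closed complement as the subspace join $[\Dav_{NA}]\vee\{\gav_0\}$, and then observes that each piece is ultrametric (Lemma~\ref{lem:NAisUltra} handles $[\Dav_{NA}]$; $\gav_0$ is trivially ultrametric). You instead rerun the binomial-expansion argument from the proof of Lemma~\ref{lem:NAisUltra} directly under the weaker hypothesis $|k|\leq 1$ for all $k$, which is exactly what the closed complement supplies. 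Your route is a bit more self-contained and avoids the subspace decomposition; the paper's route makes explicit that (iii) is essentially a corollary of (ii) together with Lemma~\ref{lem:NAisUltra}. Both are sound geometrically.
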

\begin{proof}
  (i)
  follows from Lemma~\ref{lem:ostrowbound}: if we have $|m|<1$, with $m\ne 0$, then we cannot have any $|n|>1$.
  
  (ii)
  For $n\ne 0$, if $|n| < 1$ and $|n| > 1$ are both impossible, then we must have $|n|=1$.
  
  (iii)
  By part (ii), the closed complement of $[\Dav_{A^+}]$ is the subspace join of $[\Dav_{NA}]$ and $\{\gav_0\}$.
  Using Lemma~\ref{lem:NAisUltra}, both these are contained in $[\Dav_{U}]$.
  
Conversely, if $\gav$ is ultrametric and non-trivial Archimedean, with $|n|>1$, then from
  $|n| = |1+\cdots +1| \leq \max(1,\ldots,1) = 1$ we get a contradiction.
  Hence $[\Dav_{U}]$ is contained in the complement of $[\Dav_{A^+}]$. 
\end{proof}

The following two propositions extend our previous work in Section~\ref{sec:MainThm} to the present setting. An important constructive issue is a consequence of Remark~\ref{rem:infsupDed}:
even for a Dedekind absolute value, the upper real $\lambda_{\gav}$ of Section~\ref{subsubsec:1stdirection}, calculated as an $\inf$, is not necessarily Dedekind.
To prove it is, we need to deal with the non-Archimedean and Archimedean cases separately.

\begin{proposition}\label{prop:QnonArch}
  We have an isomorphism
  \[
    [\Dav_{NA}] \cong \PSpec \times (-\infty, 0) \text{.}
  \]
\end{proposition}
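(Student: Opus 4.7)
The plan is to deduce this from the upper-real version $[\lav_{NA}] \cong \PSpec \times \overleftarrow{[-\infty, 0)}$ already established in Corollary~\ref{cor:mainGluing}, by restricting the isomorphism $\widehat{f}$, $\widehat{g}$ of Theorem~\ref{thm:ostrowskiN} to the Dedekind positive definite side.

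First I would verify that $\widehat{f}$ sends $[\Dav_{NA}]$ into $\PSpec \times (-\infty, 0)$. By Proposition~\ref{prop:avfromfhat}(i), for $\gav \in [\Dav_{NA}]$ we have $\widehat{f}(\gav) = (p, \log_p|p|)$ where $p$ is the unique prime in $\frap_{\gav}$. Two things need checking for $\lambda := \log_p |p|$: that it is Dedekind, and that $-\infty < \lambda$. The first follows because $|p|$ is a positive Dedekind (as $\gav$ is Dedekind), and $\log_p$ restricts to a well-defined Dedekind-valued map on positive Dedekinds (by the exponentiation/logarithm package recalled at the end of Section~\ref{sec:UpperReals}). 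The second follows from positive definiteness: $|p| > 0$ forces $\log_p |p| > -\infty$.

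Conversely, given $(p, \lambda) \in \PSpec \times (-\infty, 0)$, Proposition~\ref{prop:ghatcases}(i) gives $|n|_{p,\lambda} = (p^{\ord_p(n)})^{\lambda}$ for $n \ne 0$. Since $p^{\ord_p(n)}$ is a positive integer and $\lambda$ is now Dedekind, Dedekind exponentiation produces a Dedekind real. Strict positivity of $|n|_{p,\lambda}$ for $n \ne 0$ follows from $\lambda > -\infty$, so $\widehat{g}(p, \lambda)$ is Dedekind positive definite. That the two composites are identities is inherited from Theorem~\ref{thm:ostrowskiN}.

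The main subtlety, flagged in Discussion~\ref{dis:Dedtouppermonic}, will be to check that the restricted bijection is genuinely an isomorphism of point-free spaces rather than merely a bijection of points. The Dedekind enhancement adds opens of the form $r < |n|$ to $[\Dav_{NA}]$ and opens $s < \lambda$ to $(-\infty, 0)$, and these extra opens must correspond under the restrictions of $\widehat{f}$ and $\widehat{g}$. Since $x^{\lambda}$ and $\log_p x$ on positive Dedekinds are mutually inverse geometric maps, the transfer of these Dedekind opens is routine via the defining formula $|n| = (p^{\ord_p(n)})^{\lambda}$, completing the isomorphism.
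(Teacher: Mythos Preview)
Your proposal is correct and follows essentially the same route as the paper: restrict $\widehat{f}$ and $\widehat{g}$ from Theorem~\ref{thm:ostrowskiN} to the Dedekind positive definite side, check via Propositions~\ref{prop:avfromfhat}(i) and~\ref{prop:ghatcases}(i) that the restrictions land in $\PSpec\times(-\infty,0)$ and $[\Dav_{NA}]$ respectively, and then deduce the composites are identities from the upper-real case. The paper packages the last step slightly more cleanly: it arranges the restricted maps $F,G$ and the forgetful maps into a commutative square whose horizontal rows $[\Dav_{NA}]\to[\lav]$ and $\PSpec\times(-\infty,0)\to\FL$ are monic (Discussion~\ref{dis:Dedtouppermonic}), so $G\circ F=\id$ and $F\circ G=\id$ follow immediately from $\widehat{g}\circ\widehat{f}=\id$ and $\widehat{f}\circ\widehat{g}=\id$ by cancellation. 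This makes your final paragraph about extra opens transferring unnecessary: once $F$ and $G$ are built as genuine geometric maps (which you have done by exhibiting the Dedekind data), the monic argument already certifies they are inverse as maps of point-free spaces, not merely on points.
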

\begin{proof}
  If $\gav\in [\Dav_{NA}]$, consider
  $\widehat{f}(\gav) = (\frap_{\gav}, \lambda_{\gav})$.
  Because $\gav$ is non-Archimedean, there is a unique $p\in\PSpec$ such that $\frap_{\gav}=(p)$,
  and this gives a map $[\Dav_{NA}] \to \PSpec$.
  We then have $\lambda_{\gav}= \inf_{q\in\PSpec} \log_q|q| = \log_p|p|$,
  because $p$ is the unique prime for which $|p|<1$,
  and this $\lambda_{\gav}$ is Dedekind in $(-\infty,0)$.
  This gives a map 
  $$F\colon [\Dav_{NA}] \rightarrow \PSpec \times (-\infty,0).$$
  
  In the opposite direction, suppose
  $(\frap, \lambda)\in\FLDed$ has $\frap=(p)$ (and $\lambda<0$).
  Then, by Proposition~\ref{prop:ghatcases}, $\widehat{g}$ maps it to the absolute value with
  $|n|_{(p),\lambda}
    = \left( p^{\ord_p(n)} \right)^\lambda $, which is Dedekind and positive definite. Call the associated map of this construction $G$. Assembling the data, this gives commutative diagrams
    \begin{equation} \label{dia:DavNA}
      \begin{tikzcd}
        {[\Dav_{NA}]}
          \ar[r]
          \ar[d, shift right=1ex, "F"']
        & {[\lav_{NA}]}
          \ar[r, hook]
        & {[\lav]}
          \ar[d, shift right=1ex, "\widehat{f}"']
        \\
        \PSpec \times (-\infty, 0)
          \ar[u, shift right=1ex, "G"']
          \ar[r, hook]
        & \FLDed
          \ar[r]
        & \FL
          \ar[u, shift right=1ex, "\widehat{g}"']
      \end{tikzcd}
    \end{equation}
    Both rows are monic (using Discussion~\ref{dis:Dedtouppermonic}), so
    $F$ and $G$ are inverse to each other because $\widehat{f}$ and $\widehat{g}$ are.
\end{proof}

\begin{corollary} \label{cor:QNonArch}
  For any $\gav\in [\Dav_{NA}]$ with $|p|<1$, $p\in\PSpec$, there exists $\alpha\in(0,\infty)$ such that $\gav=\gav_p^\alpha$.
\end{corollary}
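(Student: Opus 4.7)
The plan is to extract $\alpha$ directly from the classification given by Proposition~\ref{prop:QnonArch}. Starting from $\gav \in [\Dav_{NA}]$, that proposition associates a unique pair $(p', \lambda) \in \PSpec \times (-\infty, 0)$, and under the composite isomorphism the Dedekind real $\lambda$ is precisely $\log_{p'}|p'|$. The hypothesis $|p| < 1$ pins down $p'=p$, since by Proposition~\ref{prop:avfromfhat}(i) the associated prime is characterised as the unique prime with $|p'|<1$, and here we are handed such a prime. So the data reduces to a single negative Dedekind $\lambda$.

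Next I would set $\alpha := -\lambda$. Negation of a Dedekind real is a Dedekind real, and since $\lambda \in (-\infty, 0)$ we get $\alpha \in (0, \infty)$ as required. The verification that $\gav = \gav_p^\alpha$ is then an almost mechanical exponent-juggling. On a non-zero $n \in \Z$, Proposition~\ref{prop:avfromfhat}(i) gives
\[
  |n| \;=\; \bigl(p^{\ord_p(n)}\bigr)^{\lambda}
       \;=\; p^{\lambda \cdot \ord_p(n)}
       \;=\; p^{-\alpha \cdot \ord_p(n)}
       \;=\; \bigl(p^{-\ord_p(n)}\bigr)^{\alpha}
       \;=\; |n|_p^{\,\alpha},
\]
using the basic exponential equations of Theorem~\ref{thm:exponentiation} (valid here because the relevant bases and exponents are Dedekind). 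For $n=0$ both sides are $0$. Finally, $\gav$ and $\gav_p^\alpha$ agree on $\Z$, and both are positive definite Dedekind absolute values on $\Q$; by Proposition~\ref{prop:QZDpos} their extensions to $\Q$ therefore coincide.

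There is no real obstacle here, because all the hard work lives in Proposition~\ref{prop:QnonArch}; the only subtlety worth flagging is the Dedekindness of $\alpha$. This is exactly what would fail if we tried to run the same argument at the boundary case $\lambda = 0$ (the trivial absolute value), since there the infimum defining $\lambda$ is only guaranteed to be upper (cf. Remark~\ref{rem:infsupDed}), matching the phenomenon that will be highlighted later for $\FLDed$ vs. $[\Dav]$. Thus the restriction to $\gav \in [\Dav_{NA}]$ (equivalently $\lambda < 0$) is precisely what makes the extraction of $\alpha$ as a genuine positive Dedekind real legitimate.
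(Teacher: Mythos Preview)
Your proof is correct and takes essentially the same approach as the paper: set $\alpha=-\lambda_{\gav}$ and unwind the formula $|n|=(p^{\ord_p(n)})^{\lambda_{\gav}}$ into $|n|_p^\alpha$. The paper's proof is a one-liner citing Proposition~\ref{prop:ghatcases} rather than Proposition~\ref{prop:avfromfhat}(i), but these give the same formula (the two propositions are the two halves of the isomorphism $\widehat{f},\widehat{g}$), so the difference is cosmetic; your version simply spells out the exponent rearrangement and the extension to $\Q$ explicitly.
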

\begin{proof}
  Take $\alpha=-\lambda_{\gav}$ and use Proposition~\ref{prop:ghatcases}.
\end{proof}

\begin{proposition}\label{prop:QArch}
	We have isomorphisms
	\[ [\Dav_{A}]\cong[0,1] \text{,} \]
	\[ [\Dav_{A^+}]\cong (0,1] \text{.} \]
	In particular, for any $\gav\in[\Dav_{A}]$, there exists $\alpha\in[0,1]$ such that $\gav=\gav^\alpha_{\infty}$;
    if $\gav$ is non-trivial then $\alpha > 0$.
\end{proposition}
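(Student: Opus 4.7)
The plan is to mirror the strategy of Proposition~\ref{prop:QnonArch}, using the established isomorphism $[\lav]\cong \FL$ from Theorem~\ref{thm:ostrowskiN} together with diagram~\eqref{eq:PLdia}. I would first define a map $F\colon [\Dav_A]\to [0,1]$ by sending $\gav$ to $\lambda_{\gav}=\inf\{\log_q|q|\mid q\in\PSpec\}$. The only non-trivial point is showing that this infimum, a priori only an upper real, is in fact Dedekind and lies in $[0,1]$: since $\gav$ is Archimedean we have $1\leq |q|$ for every prime $q$, so $\log_q|q|\geq 0$, and by Lemma~\ref{lem:ostrowbound}~(ii) the value $\log_q|q|=\max\{0,\log_q|q|\}$ is a constant $M_{\gav}\in[0,1]$ (a fixed Dedekind real) independent of the prime $q$. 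Hence the infimum collapses to $M_{\gav}$, which is Dedekind in $[0,1]$.

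Next I would construct the inverse $G\colon [0,1]\to [\Dav_A]$ by $\alpha\mapsto \gav_{\infty}^{\alpha}$, i.e. $|n|=n^{\alpha}$ for $n>0$. Since $\alpha$ is Dedekind, $n^{\alpha}$ is Dedekind by the exponentiation results recalled at the end of Section~\ref{sec:UpperReals}, and positive definiteness is immediate. For the triangle inequality I would invoke the Case~2 calculation in the proof of Proposition~\ref{prop:ghattoav}: the inequality $(m+n)^q\leq m^q+n^q$ for rational $q\in (0,1]$ was established there, and the Dedekind case follows by taking the infimum over rationals $q>\alpha$, which in the Dedekind setting gives back $\alpha$ and the two sides of the inequality as genuine Dedekinds.

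With $F$ and $G$ in hand, I would assemble them into a diagram analogous to~\eqref{dia:DavNA}:
\[
\begin{tikzcd}
{[\Dav_A]} \ar[r] \ar[d, shift right=1ex, "F"'] & {[\lav_A]} \ar[r, hook] & {[\lav]} \ar[d, shift right=1ex, "\widehat{f}"'] \\
{[0,1]} \ar[u, shift right=1ex, "G"'] \ar[r, hook] & \FLDed \ar[r] & \FL \ar[u, shift right=1ex, "\widehat{g}"']
\end{tikzcd}
\]
where the bottom-left inclusion sends $\alpha$ to $((0),\alpha)$ (using Corollary~\ref{cor:mainGluing}). Both rows are monic by Discussion~\ref{dis:Dedtouppermonic}, so $F$ and $G$ are mutually inverse because $\widehat{f}$ and $\widehat{g}$ are, proving $[\Dav_A]\cong [0,1]$. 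The ``in particular'' clause is then just the equation $\gav=G(F(\gav))=\gav_{\infty}^{\lambda_{\gav}}$.

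For the restriction $[\Dav_{A^+}]\cong (0,1]$, I would observe that under the correspondence $\gav\leftrightarrow\alpha$ the defining condition of $[\Dav_{A^+}]$ (some $n\in\Znq$ with $|n|>1$) translates to: some $n\geq 2$ with $n^{\alpha}>1$, which is equivalent to $\alpha>0$. Hence $F$ restricts to an isomorphism with $(0,1]$. The main obstacle I anticipate is the triangle inequality for $\gav_{\infty}^{\alpha}$ when $\alpha$ is an arbitrary Dedekind rather than rational; but since both sides are now Dedekind, the argument of Proposition~\ref{prop:ghattoav} adapts cleanly without needing the more delicate upper/lower real manipulations that appeared there.
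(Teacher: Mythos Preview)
Your proposal is correct and follows essentially the same approach as the paper. The only cosmetic difference is that the paper defines $F(\gav)=\log_2|2|$ directly (picking a single prime) rather than writing it as the infimum and then arguing the infimum collapses to a constant; both routes use the Fundamental Lemma~\ref{lem:ostrowbound} in the same way, and the paper similarly offloads the absolute-value axioms for $G$ to the already-proven fact that $G(\lambda)=\widehat{g}((0),\lambda)$, so your separate discussion of the triangle inequality is correct but redundant.
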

\begin{proof} The proof is similar to that of Proposition~\ref{prop:QnonArch}. Define the maps
	\begin{align*}
	F\colon [\Dav_A]&\longrightarrow [0,1]\\
	\gav&\longmapsto \log_2|2|,
	\end{align*}	
	and 	
	\begin{align*}
	  G\colon [0,1]&\longrightarrow[\Dav_A]\\
	  \lambda&\longmapsto \gav^\lambda_\infty = \widehat{g}((0),\lambda).
	\end{align*}

  From the Fundamental Lemma~\ref{lem:ostrowbound}, we see that $\log_n|n| = F(\gav)$ for all integers $n > 1$, and so $\widehat{f}(\gav) = ((0), F(\gav))$.
  Also, $G(\lambda)$ is clearly Dedekind and positive definite.
  
  We now have a diagram like that of~\eqref{dia:DavNA}, but with $[\Dav_A]$ and $[0,1]$ on the left. The same argument shows that $F$ and $G$ are mutually inverse. A similar proof shows that $[\Dav_{A^+}]\cong (0,1]$.
\end{proof}

\begin{theorem}[Ostrowski's Theorem for $\mathbb{Q}$]\label{thm:ostrowskiQ} Let $\gav$ be a non-trivial absolute value on $\mathbb{Q}$. Then, one of the following must hold:
	\begin{enumerate}[label=(\roman*)]
		\item $\gav=\gav^\alpha_{\infty}$ for some $\alpha\in(0,1]$; or
		\item $\gav=\gav_p^\alpha$ for some $\alpha\in (0,\infty)$ and some prime $p\in\mathbb{N}_+$. 
	\end{enumerate} 
\end{theorem}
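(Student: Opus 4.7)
The plan is essentially an assembly job, using the machinery developed in Sections~\ref{sec:GeomThAV}--\ref{sec:Qav}. First, by Proposition~\ref{prop:fieldav}, an absolute value $\gav$ on $\Q$ is automatically Dedekind and positive definite, and by Proposition~\ref{prop:QZDpos} it is uniquely determined by its restriction to $\Z$, which lives in $[\Dav] = [\ADavpos{\Z}]$. The non-triviality hypothesis places $\gav|_{\Z}$ in $[\Dav_{NA}] \vee [\Dav_{A^+}]$ by Proposition~\ref{prop:Qavopens}(ii), and these two pieces are disjoint by~(i), so the case split is genuinely exhaustive at the level of points.

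In the non-Archimedean case, Corollary~\ref{cor:QNonArch} supplies a unique prime $p\in\PSpec$ with $|p| < 1$ and an exponent $\alpha = -\lambda_{\gav} \in (0,\infty)$ such that $\gav|_{\Z} = \gav_p^\alpha$. The unique extension from $\Z$ to $\Q$ in Proposition~\ref{prop:QZDpos} is compatible with the identity $|m/n|_p^\alpha = |m|_p^\alpha / |n|_p^\alpha$, so the equality $\gav = \gav_p^\alpha$ persists on all of $\Q$, giving case~(ii).

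In the non-trivial Archimedean case, Proposition~\ref{prop:QArch} provides the isomorphism $[\Dav_{A^+}] \cong (0,1]$ via $\gav \mapsto \log_2|2|$ with inverse $\alpha \mapsto \gav_\infty^\alpha$. Hence $\gav|_{\Z} = \gav_\infty^\alpha$ for some $\alpha \in (0,1]$, and again the unique extension to $\Q$ preserves this identity, giving case~(i).

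The conceptual work has already been done upstream, so the only delicate point to address is the legitimacy of the case split itself: geometrically, $[\Dav_{NA}]$ and $[\Dav_{A^+}]$ glue into $[\Dav] \setminus \{\gav_0\}$ as a join of open subspaces, not as a coproduct, so constructively one cannot define a map out of $[\Dav]$ by case analysis. However, the theorem makes a classical assertion about an individual point $\gav$, for which the disjointness and coverage established in Proposition~\ref{prop:Qavopens} suffice. This is precisely the situation flagged in Discussion~\ref{dis:casesplittingGEOMETRICPROOF}: case-splitting is licensed when one is verifying a property of an already-given absolute value, which is exactly what we are doing here.
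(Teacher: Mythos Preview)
Your proof is essentially correct and follows the same route as the paper: reduce to $[\Dav]$, use non-triviality to land in $[\Dav_{NA}]\vee[\Dav_{A^+}]$, and then invoke Corollary~\ref{cor:QNonArch} and Proposition~\ref{prop:QArch} on the two pieces.

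However, your final paragraph contains a factual error that undercuts your explanation of why the case split is legitimate. You assert that $[\Dav_{NA}]$ and $[\Dav_{A^+}]$ glue together ``as a join of open subspaces, not as a coproduct.'' This is false, and the paper's proof relies on precisely the opposite fact: because these two opens are \emph{disjoint} (Proposition~\ref{prop:Qavopens}(i)), their join \emph{is} categorically their coproduct. (This is a general locale-theoretic fact: for disjoint opens $U,V$ one has $\downarrow(U\vee V)\cong \downarrow U\times\downarrow V$ as frames.) Consequently one \emph{can} define a map out of $[\Dav_{NA}]\vee[\Dav_{A^+}]$ geometrically by specifying it on each summand, and the paper does exactly this, combining the $F$- and $G$-maps to obtain an honest isomorphism of spaces. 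Your appeal to Discussion~\ref{dis:casesplittingGEOMETRICPROOF} is misplaced: that discussion concerns open/closed pairs (as in $[\Dav_{NA}]$ versus $[\Dav_A]$), where one genuinely cannot define maps by cases; it does not apply to two disjoint \emph{opens}. Your retreat to viewing the theorem as ``a classical assertion about an individual point'' is therefore unnecessary and sits awkwardly with the paper's geometric methodology, in which ``let $\gav$ be\ldots'' refers to the generic point.
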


\begin{proof}
  Working with non-trivial absolute values, we are in
  $[\Dav_{NA}]\vee [\Dav_{A^+}]$.
  As it is a join of two disjoint opens, we can validly use case splitting in the geometric reasoning to combine the $F$-maps of Propositions~\ref{prop:QnonArch} and~\ref{prop:QArch} to get a map from $[\Dav_{NA}]\vee [\Dav_{A^+}]$
  to $(0,\infty) \vee (0,1]$.
  Categorically, $[\Dav_{NA}]\vee [\Dav_{A^+}]$ is the coproduct of $[\Dav_{NA}]$ and $[\Dav_{A^+}]$.
  Similarly, we can combine the $G$-maps to get an inverse.
  
  For the second part, we then use Corollary~\ref{cor:QNonArch}.
\end{proof}

\begin{discussion}\label{dis:TopAlgebra} While negation inverts orientation on one-sided reals (and so, a negated upper real becomes a lower real), a negated Dedekind real is still a Dedekind, so Corollary~\ref{cor:QNonArch} avoids the same issues mentioned in  Discussion~\ref{dis:nonArchUPPER REALS}. Further, unlike the one-sided reals, $-\infty,\infty$ are not Dedekinds, which is why $\lambda=\log_{\frac{1}{p}}|p|\in(0,\infty)$, as opposed to $\lambda\in (0,\infty]$. In other words, if we wish to view $\lambda$ as a Dedekind, then we lose the ability to speak about the true seminorms,
the $p$-characteristic absolute values $\pchav{\cdot}{p}$,
which correspond to $\gav^\infty_p$ for prime $p\in\mathbb{N}_+$. This, combined with Observations~\ref{obs:absvalueQ} and \ref{obs:seminormsUPPER}, brings into focus the following connection between the algebra (absolute values) and topology (the reals):
	\begin{itemize}
		\item Multiplicative Seminorms $ \leftrightsquigarrow$ Upper reals
		\item (Positive Definite) Norms $\leftrightsquigarrow$ Dedekind reals.
	\end{itemize}
\end{discussion}

Perhaps surprisingly, the assumption of non-triviality is essential in Theorem~\ref{thm:ostrowskiQ}. As alluded to earlier,
we cannot combine Propositions~\ref{prop:QnonArch} and~\ref{prop:QArch} to get an isomorphism of $[\Dav]$ with $\FLDed$. In fact, there is not even any map from $[\Dav]$ to $\FLDed$ that makes the obvious diagram commute with $\widehat{f}\colon [\lav] \to \FL$.
We prove this by restricting to $[\Dav_{U}]$ and considering the projection from $\FLDed$ to $(-\infty, 0]$.

\begin{observation}\label{obs:nolambdaDed}
	There is no map $F'\colon [\Dav_{U}] \to (-\infty, 0]$ compatible with $\widehat{f}$ restricted to $[\lav_U]$:
    \begin{equation} \label{eq:Fdash}
      \begin{tikzcd}
        {[\Dav_{U}]}
          \ar[r]
          \ar[d, "{F'}"']
        & {[\lav_U]}
          \ar[d, "\widehat{f}"]
        \\
        {(-\infty,0]}
          \ar[r]
        & {\overleftarrow{[-\infty,0]}}
      \end{tikzcd}
    \end{equation}
    Consequently, there is no map from $[\Dav]$ (i.e. $[\Alav{\Z}]$) to $\FLDed$ that makes diagram~\eqref{eq:PLdia} commute.
\end{observation}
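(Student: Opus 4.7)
The plan is a proof by contradiction reasoning about global points. Suppose $F'$ exists. Compatibility with $\widehat{f}$ forces $F'(\gav)$, viewed as an upper real via the inclusion $(-\infty,0]\hookrightarrow\overleftarrow{[-\infty,0]}$, to coincide with $\lambda_\gav$; since any Dedekind real is uniquely determined by its upper section (cf.\ Discussion~\ref{dis:Dedtouppermonic}), the values of $F'$ on the standard global points of $[\Dav_U]$ are pinned down: $F'(\gav_0) = 0$ and $F'(\gav_p) = \log_p |p|_p = -1$ for every prime $p$.

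Now the open $V := (-\tfrac{1}{2}, 0]$ of $(-\infty, 0]$ contains $F'(\gav_0)$ but misses every $F'(\gav_p)$, so its pullback $W := (F')^{-1}(V)$ would be an open subspace of $[\Dav_U]$ containing $\gav_0$ but avoiding every $\gav_p$. I would derive a contradiction by showing no such open can exist. A basic open of $[\Dav_U]$ is a finite conjunction of atomic opens $|n_i|<q_i$ and $q_j<|n_j|$; for $\gav_0$ to satisfy this we need $q_i > 1$ and $q_j < 1$ throughout (since $|n_k|_0 = 1$ whenever $n_k \neq 0$). Let $S$ be the finite set of primes dividing some $n_i$ or $n_j$ that appears. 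For any prime $p\notin S$ one has $|n_k|_p=1$ for all $k$, so $\gav_p$ also satisfies all these conditions. Since $\gav_0$ lies in $W$ and opens are joins of basic opens, $\gav_0$ lies in some basic open $B \subseteq W$; but then all but finitely many $\gav_p$ lie in $B \subseteq W$ as well, contradicting that $W$ contains no $\gav_p$.

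For the final sentence, note that any map $\Phi \colon [\Dav] \to \FLDed$ making diagram~\eqref{eq:PLdia} commute, composed with the $\lambda$-projection and restricted to the subspace $[\Dav_U]\subseteq [\Dav]$, would land in $(-\infty, 0]$ (because for ultrametric $\gav$ the upper real $\lambda_\gav$ is $\leq 0$), thereby supplying the forbidden map $F'$. The main obstacle lies not in any single computation but in the conceptual diagnosis: one must pinpoint why a Dedekind-valued lift fails to exist, and the reason is the familiar phenomenon that infima of Dedekind reals are only guaranteed to be upper reals --- made topologically visible here by the fact that the trivial absolute value $\gav_0$ cannot be separated from $\{\gav_p : p \in \PSpec\}$ by any open of $[\Dav_U]$, while the Dedekind target $(-\infty, 0]$ does separate $0$ from $-1$.
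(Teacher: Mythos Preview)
Your proof is correct and follows essentially the same approach as the paper's: assume $F'$ exists, pull back an open neighbourhood of $0$ in the Dedekind target, and derive a contradiction by showing that any basic open of $[\Dav_U]$ containing $\gav_0$ must also contain absolute values whose $\lambda$-value lies outside that neighbourhood. The only cosmetic differences are that the paper pulls back $(-1,0]$ and exhibits a custom $\gav$ with $\log_p|p|=-2$ for a fresh prime $p$, whereas you pull back $(-\tfrac{1}{2},0]$ and use the standard $\gav_p$ (with $\lambda=-1$) directly; and the paper describes basic opens via $q<\log_p|p|<r$ rather than your equivalent $|n_i|<q_i$, $q_j<|n_j|$.
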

\begin{proof}
  The conditions tell us that 
  \[
    F'(\gav) = \inf_{p \text{ prime}} \log_p |p| \text{,}
  \]
  but also that this infimum is to be Dedekind (and not just upper). Let us examine the inverse image
  $F^{\prime-1}(-1, 0]$.
  
  Using multiplicativity and the theory of logarithms, we see that a subbase of opens for $[\Dav_{U}]$ is provided by conditions
  $q < \log_p|p| < r$, where $q$ and $r$ are rationals and $p$ is prime. Since $\gav$ is ultrametric, we can assume $q<0$. Hence a base of opens is provided by finite conjunctions
  \begin{equation} \label{eq:avubasic}
    \bigwedge_{i=0}^n q_i < \log_{p_i} |p_i| < r_i
    \text{.} 
  \end{equation}

  If $F'$ exists, then $F^{\prime-1}(-1, 0]$ is an open containing $\gav_0$ (because $F'(\gav_0)=0$), and so $\gav_0$ is in such a basic.
  We must then have $0<r_i$ for every $i$.
  However, for such a basic it is impossible to be contained in
  $F^{\prime-1}(-1, 0]$.
  For we can define $\gav$ so that $|p|<1$ for some prime $p$ distinct from all the $p_i$s, and $\log_p |p|=-2$. Then that $\gav$ is in our basic, but not in $F^{\prime-1}(-1, 0]$.
  
  Now suppose we have a map from $[\Dav]$ to $\FLDed$ as suggested. We then have the following commutative diagram.
  \[
    \begin{tikzcd}
      {[\Dav_{U}]}
        \ar[r]
        \ar[d,hook]
      & {[\lav_U]}
        \ar[d,hook]
      \\
      {[\Dav]}
        \ar[r]
        \ar[d]
      & {[\lav]}
        \ar[d]
      \\
      \FLDed
        \ar[r]
        \ar[d]
      & \FL
        \ar[d]
      \\
      {(-\infty,1]}
        \ar[r]
      & {\overleftarrow{[-\infty,1]}}
    \end{tikzcd}
  \]
  The vertical composites factor via $(-\infty, 0]$ and $\overleftarrow{[-\infty, 0)}$, and we obtain a map $F'$ as in Diagram~\eqref{eq:Fdash}.
\end{proof}

\begin{discussion}
To test our understanding, consider the following argument:
	\begin{itemize}
		\item $[\Dav_{NA}]$ and $[\Dav_{A}]$ are open/closed complement subspaces in $[\Dav]$.
		\item Propositions~\ref{prop:QnonArch} and~\ref{prop:QArch} characterise $[\Dav_{NA}]$ and $[\Dav_{A}]$.
		\item Hence, by appealing to the Case-Splitting Lemma~\ref{lem:casesplitting} or otherwise, conclude that $[\Dav]\cong\FLDed$.
	\end{itemize}
This proof strategy imitates what we did in Section~\ref{sec:MainThm}, yet Observation~\ref{obs:nolambdaDed} tells us that the argument is wrong. So where does it fail? Recall: in Section~\ref{sec:MainThm}, we defined a map 
\begin{align}
\hat{f}\colon [\lav] &\longrightarrow \FL\\
|\cdot|&\longmapsto (\frap_{|\cdot|},\lambda_{|\cdot|})  \nonumber 
\end{align}
by verifying that $\frap_{|\cdot|},\lambda_{|\cdot|}\in\FL$ \emph{without} any initial assumptions on whether $|\cdot|$ is Archimedean or non-Archimedean. However, our present setting requires $\lambda_{|\cdot|}$ to be a Dedekind, not just an upper real. This was verified in Propositions~\ref{prop:QnonArch} and ~\ref{prop:QArch} but notice this argument involves a case-splitting. This is why the argument in Section~\ref{sec:MainThm} does not extend to yield a map $[\Dav]\to\FLDed$, or indeed $[\Dav_{U}]\to \FLDed$. In addition, the Dedekinds do not form a subspace of the upper reals (cf. Discussion~\ref{dis:Dedtouppermonic}), so the Case-Splitting Lemma cannot be applied to glue  Propositions~\ref{prop:QnonArch} and ~\ref{prop:QArch} together in order to assemble the desired map.
\end{discussion}

A related question looks at the spectra.
The reader may reasonably wonder: does $\ISpec$ still play a role in the analysis of absolute values on $\Q$? After all, $\ISpec$ is not mentioned in the statement of Theorem~\ref{thm:ostrowskiQ} (unlike Theorem~\ref{thm:ostrowskiN}), and only features implicitly in its proof. Might we not e.g. use the Zariski spectrum $\LSpec(\Z)$ instead to denote the non-Archimedean places of $\Q$? 

The answer comes from a closer examination of the maps from $[\Dav]$ to the other spectra. We know already that $[\Dav]$ has a map to $\ISpec$, via $[\lav]$ and $\FL$.
Can we use the Dedekind property to find a map to either $\LSpec$ or $\FSpec$?
Again, we can find a negative answer by restricting to $[\Dav_{U}]$.
Of course, we already know that there is a map from $[\Dav_{NA}]$ to $\PSpec$, and hence to the other three spectra.
The problem comes when we try to bring in the trivial absolute value.



\begin{observation}[coZariski vs. Zariski/Constructible Topology]\label{obs:coZariskiSpectra}
  There is no map from $[\Dav_{U}]$ to $\LSpec(\Z)$
  that maps $\gav_0$ to $\Z-\{0\}$, and each $\gav_p^\alpha$ to the prime filter of integers not divisible by $p$.
  
  It follows that there is no map to $\FSpec(\Z)$ with the corresponding properties.
\end{observation}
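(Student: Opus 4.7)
The plan is to adapt the obstruction argument of Observation~\ref{obs:nolambdaDed}. Suppose for contradiction that a map $F\colon [\Dav_U] \to \LSpec(\Z)$ exists with the specified values on $\gav_0$ and each $\gav_p^\alpha$. For each prime $p$, the basic Zariski open $D(p) = \{S\in\LSpec(\Z)\mid p\in S\}$ pulls back to an open $U_p\subseteq [\Dav_U]$ that must (i) contain $\gav_0$, since $p\in\Z\setminus\{0\} = F(\gav_0)$, and (ii) exclude every $\gav_p^\alpha$ with $\alpha>0$, since $p\in(p)$ and therefore $p\notin\Z\setminus(p) = F(\gav_p^\alpha)$. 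The aim is to show that no open of $[\Dav_U]$ can simultaneously satisfy (i) and (ii).

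Next I would invoke the description of opens of $[\Dav_U]$ already established in the proof of Observation~\ref{obs:nolambdaDed}: a subbase is provided by the conditions $q<\log_{p'}|p'|<r$ for primes $p'$ and rationals $q<r$, and any basic open neighbourhood of $\gav_0$ is a finite conjunction
\[
  V = \bigwedge_{i=1}^n q_i < \log_{p_i}|p_i| < r_i
\]
with $q_i<0<r_i$, involving only finitely many primes $p_1,\ldots,p_n$. Since $U_p$ is open and contains $\gav_0$, it must contain some such basic $V$.

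The key step is then to produce a point of $V$ that is forbidden from $U_p$, namely $\gav_p^\alpha$ for suitably small $\alpha>0$. For each $p_i\neq p$ we have $|p_i|_p = 1$, hence $\log_{p_i}|p_i|_p^\alpha = 0\in(q_i,r_i)$; if $p$ coincides with some $p_j$, then $\log_p|p|_p^\alpha = -\alpha$ lies in $(q_j,r_j)$ as soon as $0<\alpha<-q_j$. For such $\alpha$ we therefore obtain $\gav_p^\alpha\in V\subseteq U_p$, contradicting (ii). The final clause about $\FSpec(\Z)$ follows at once by composing with the canonical continuous projection $\FSpec(\Z)\to\LSpec(\Z)$, $(P,S)\mapsto S$: a hypothetical map $[\Dav_U]\to\FSpec(\Z)$ satisfying the analogous conditions would compose to give a map to $\LSpec(\Z)$ of the form just ruled out.

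The main obstacle I anticipate is conceptual rather than technical: one must remember that the inadequacy of $\LSpec(\Z)$ here is not a set-theoretic failure but a statement about the frame of opens. The argument boils down to the observation that in $[\Dav_U]$ the trivial absolute value $\gav_0$ cannot be topologically separated from the family $\{\gav_p^\alpha\mid\alpha>0\}$, so any assignment of points that \emph{would} separate them at the level of underlying data simply fails to be continuous. By contrast, in $\ISpec(\Z)$ the opens $\{\frap\mid p\in\frap\}$ are available, and these are exactly what is needed to express the non-Archimedean condition $|p|<1$; this asymmetry is precisely what selects $\ISpec(\Z)$ as the natural spectrum for absolute values.
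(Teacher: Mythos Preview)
Your proposal is correct and follows essentially the same approach as the paper: pull back a basic Zariski open $D(p)$, observe that any basic neighbourhood of $\gav_0$ in $[\Dav_U]$ constrains only finitely many primes with $q_i<0<r_i$, and then slip in a $p$-adic absolute value with small enough exponent to derive the contradiction. The paper's version simply fixes $p=2$ and phrases the choice of exponent as ``$\log_2|2|>\max_i q_i$'', whereas you treat general $p$ and handle the case $p\in\{p_1,\ldots,p_n\}$ explicitly; both yield the same argument, and the $\FSpec(\Z)$ corollary via projection to $\LSpec(\Z)$ is identical.
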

\begin{proof}
    
    Suppose there did exist such a map
	\[f\colon [\Dav_{U}]\rightarrow \LSpec(\Z).\]	
	Now consider the open in $\LSpec(\Z)$ comprising all those prime filters containing 2.
    It has the prime filters corresponding to all non-zero primes except 2, as well as the top filter $\Z-\{0\}$.
    Let $U$ be its inverse image under $f$.
    
    Again, $U$ is a join of basics corresponding to propositions as in condition~\eqref{eq:avubasic}.
    Hence $\gav_0$ is in one of those basics, which must have $q_i < 0<r_i$ for every $i$ -- and the values $r_i$ are then irrelevant.
    We can now find a 2-adic $\gav$ such that
    \[
      \log_2 |2| > \max_i q_i \text{,}
    \]
    and this $\gav$ is in our basic open.
    That's a contradiction, since its image under $f$ is a (the) prime filter not containing 2.
\end{proof}

\begin{discussion} Informally,  Observation~\ref{obs:coZariskiSpectra} holds because such a map requires us to decide, given some generic $\gav\in[\Dav_{U}]$, whether   $\gav$ is trivial or non-trivial, which we can only do after verifying that $|p|=1$ for all non-trivial primes $p$. This, however, requires universal quantification and is therefore non-geometric. 
	
\end{discussion}

\section{Further Remarks and Discussion}

Our discussion after Theorem~\ref{thm:ostrowskiQ} gives an overview of how Dedekind absolute values interact with the ``edge cases''. While perhaps unsurprising that $[\Dav]$ does not include the $p$-characteristic absolute values (Discussion~\ref{dis:TopAlgebra}), it was unexpected (at least to the authors) how the trivial absolute value is enmeshed with topological subtleties. Extending Observation~\ref{obs:nolambdaDed}, we pose the obvious test problem:

\begin{problem} Characterise the (entire) space $[\Dav]$, including the trivial absolute value. 
\end{problem}

\begin{discussion} Much has already been said regarding the difficulties reconciling Archimedean vs. the non-Archimedean structures (see e.g. the introduction of \cite{Maz}), which is often viewed as being bound up with the differences between the analytic vs. the algebraic world. Observations~\ref{obs:nolambdaDed} and \ref{obs:coZariskiSpectra} highlight a different perspective: just reconciling the trivial vs. non-trivial non-Archimedean absolute values itself has its own set of difficulties that require some care (at least in the Dedekind setting). 
	
	Perhaps then before we start connecting the Archimedean and the non-Archimedean, one should first figure out how the trivial absolute value interacts with each component. An important clue comes from our Ostrowski's Theorem~\ref{thm:ostrowskiN} for $\Z$: the trivial absolute value acts as a kind of hinge point between the Archimedeans and non-Archimedeans so long as we work with the (honest) upper reals. The challenge then is to adapt this picture to the Dedekind absolute values (Discussion~\ref{dis:Dedtouppermonic} may be relevant here). After which, it will be very interesting to meditate on how our characterisation of $[\Dav]$ may tell us something useful about connecting the analytic with the algebraic.
	
\end{discussion}

\begin{discussion} As alluded to in the introduction, the issue of reconciling trivial vs. non-trivial norms also persists in Berkovich geometry -- see e.g. Example 1.4 in \cite{BerkovichMonograph} or the case-split in \S 3.4 and 3.5 regarding GAGA-type results for trivial vs. non-trivial valuations. A point-free perspective on some of these issues was investigated in \cite[Ch. 5]{MingPhD} -- again, the upper reals play a helpful role.
\end{discussion}

Might the results of the present paper be understood as suggesting that we should abandon Dedekind absolute values and simply work with upper-valued absolute values on $\Z$? We argue no. One important reason is that part of what makes understanding the Dedekind absolute values so interesting is their connection with Local-Global Principles from number theory, which is less clear for the non-Dedekind case.

Let us elaborate. A polynomial $f$ with $\Q$-coefficients is said to satisfy the \emph{Local-Global Principle} just in case that $f$ has rational solutions iff $f$ has solutions over all non-trivial completions of $\Q$ (up to equivalence). There are well-known examples of polynomials that satisfy the Local-Global principle (e.g. quadratic forms) as well as those that fail it (e.g. $3x^3+4y^3+5z^3=0$), and improving our understanding of the obstructions to this principle is a deep and active area of research. 

Now, we say that two Dedekind absolute values $|\cdot|_1,|\cdot|_2$ of $\Q$ belong to the same \emph{place} iff there exists $\alpha\in (0,1]$ such that $|\cdot|_1=|\cdot|^\alpha_2$ or  $|\cdot|_2=|\cdot|^\alpha_1$. In particular, one can verify that any two absolute values from the same place define topologically equivalent completions of $\Q$. Notice then that our Ostrowski's Theorem~\ref{thm:ostrowskiQ} for $\Q$ classifies all non-trivial completions of $\Q$ up to equivalence, as is relevant for the Local-Global Principle. Notice also (e.g. by Discussion~\ref{dis:nonArchUPPER REALS}) that the language of places does not adapt well to the setting of upper-valued absolute values on $\Z$. 

This sets up the following test problem, which will be the focus of a subsequent paper by the authors.

\begin{problem} Characterise $[\mathrm{places}]$, the space of places of $\Q$.
\end{problem}

\begin{discussion}\label{dis:workwithISpec} Observation~\ref{obs:coZariskiSpectra} gives us our first important clue. It is natural to expect there to exist a quotient map
	\[\mathrm{quot}\colon [\Dav_{U}]\longrightarrow [\mathrm{places}_{U}]\]
	that sends an ultrametric absolute value to its corresponding place. However, if $[\mathrm{places}_{U}]=\LSpec(\Z)$ or $\FSpec(\Z)$, then
	Observation~\ref{obs:coZariskiSpectra} tells us that no such map exists. This gives another compelling reason for working with $\ISpec(\Z)$, as opposed to the other spectral spaces.
\end{discussion}

	
	\bibliography{ToposAbsValue}

\end{document}